\documentclass[11 pt]{amsart}
\usepackage{latexsym,amsmath,amsfonts,graphicx}
\usepackage{subfigure}
\usepackage{bbm}
\usepackage{amssymb}
\usepackage{float}
\usepackage[dvips]{color}
\setlength{\topmargin}{0.5cm}
\setlength{\textheight}{22cm}
\setlength{\oddsidemargin}{0.6cm}
\setlength{\evensidemargin}{0.6cm}
\setlength{\textwidth}{15cm}
\setlength{\arraycolsep}{1.5pt}

\newtheorem{theorem}{Theorem}[section]
\newtheorem{thm}[theorem]{Theorem}
\newtheorem{prop}{Proposition}[section]

\newtheorem{lemma}[prop]{Lemma}
\newtheorem{remark}[prop]{Remark}
\newtheorem{defi}[prop]{Definition}

\newtheorem{example}{Example}[section]
\numberwithin{equation}{section}

\def\cal{\mathcal }

\def\Z{\mathbb Z}
\def\Q{\mathbb Q}

\def\mathscr{\mathcal }

\def\diag{\text{diag}}


\newcommand{\ba}{{\mathbf{a}}}

\newcommand{\bi}{{\mathbf{i}}}
\newcommand{\bj}{{\mathbf{j}}}

\newcommand{\bu}{{\mathbf{u}}}
\newcommand{\bv}{{\mathbf{v}}}

\newcommand{\bx}{{\mathbf{x}}}
\newcommand{\by}{{\mathbf{y}}}
\newcommand{\bz}{{\mathbf{z}}}

\newcommand{\SD}{{\mathcal D}}
\newcommand{\SE}{{\mathcal E}}

\begin{document}

\title[Lipschitz classification of  Bedford-McMullen carpets]
{Lipschitz classification of  Bedford-McMullen carpets with uniform horizontal fibers}


\author{Ya-min Yang} \address{Institute of applied mathematics, College of Science, Huazhong Agriculture University, Wuhan,430070, China.
} \email{yangym09@mail.hzau.edu.cn
 }
\author{Yuan Zhang$^*$} \address{Department of Mathematics and Statistics, Central China Normal University, Wuhan, 430079, China
} \email{yzhang@mail.ccnu.edu.cn
 }

\date{\today}
\thanks {The work is supported by NSFS Nos. 11971195 and 11601172.}

\thanks{{\bf 2000 Mathematics Subject Classification:}  28A80,26A16\\
 {\indent\bf Key words and phrases:}\ Self-affine carpet, uniform horizontal fibers, Lipschitz equivalence}

\thanks{* The correspondence author.}
\begin{abstract}
Let ${\cal M}_{t,v,r}(n,m)$, $2\leq m<n$,  be the collection of self-affine carpets with expanding matrix
$\diag(n,m)$ which are totally disconnected, possessing vacant rows and with uniform horizontal fibers.
In this paper, we introduce a notion of structure tree of a metric space, and thanks to this new notion, we completely characterize when two carpets in ${\cal M}_{t,v,r}(n,m)$
are   Lipschitz  equivalent.

\end{abstract}
\maketitle


 \section{\textbf{Introduction}}
Let $2\leq m<n$ be two  integers and denote by
$\diag (n,m)$ the diagonal matrix with entries $n$ and $m$.
 Let ${\cal D}\subset\{0,1,\dots,n-1\}\times\{0,1,\dots,m-1\}$  and we call it the \emph{digit set}.
 For $d\in \SD$, define
 $$S_d(z)=\diag(n^{-1}, m^{-1})(z+d).$$
Then $\{S_d\}_{d\in \SD}$ is an iterated function system (IFS), and  there exists a unique non-empty compact set $E=K(n,m,{\cal D})$
such that $E=\underset{d\in{\cal D}}\bigcup  S_d(E);$
we call  $E$  a \emph{Bedford-McMullen} carpet, or a \emph{self-affine carpet}.

Two metric spaces $(X, d_X)$ and $(Y, d_Y)$ are said to be Lipschitz equivalent,
denoted by $(X, d_X)\sim (Y, d_Y)$,
if there exists a map
$f:~X\rightarrow Y$ which is bi-Lipschitz, that is, there is a constant $C>0$ such that
$$
C^{-1}d_X(x,y)\leq d_Y(f(x),f(y))\leq C d_X(x,y), \quad \text{ for all } x,y\in X.
$$
There are many works on Lipschitz equivalence of self-similar sets, see \textbf{\cite{DS, FaMa92,LL13,  RRW12, RRX06,  R10,XX10}.}
For example, Rao, Ruan, and Xi \cite{RRX06} and Xi and Xiong \cite{XX10} showed that   for fractal cubes which are totally disconnected and have the same expanding matrix, the Hausdorff dimension completely determines the Lipschitz class. However, there are  few works  on the classification of self-affine carpets (\cite{Miao2013, Miao2017, Rao2019}).

The  study of the Lipschitz classification of totally disconnected self-affine carpets
 is much more difficult than that about the self-similar sets.
    In what follows, we use ${\mathcal M}_t(n,m)$  to  denote the collection of totally disconnected self-affine carpets  with expanding matrix $\diag(n,m)$.

Miao, Xi and Xiong\cite{Miao2017} and Rao, Yang and Zhang \cite{Rao2019} developed several  Lipschitz invariants of self-affine carpets which are very useful.
First, let us introduce some notations. Let $\# A$ be the cardinality of $A$.
 Let $E=K(n,m,\SD)$ be a self-affine carpet.
We define
\begin{equation}
a_j=\#\{i;~(i,j)\in {\cal D}\}, \quad 0\leq j\leq m-1,
\end{equation}
 and call $\ba=(a_j)_{j=0}^{m-1}$
the \emph{distribution sequence} of ${\cal D}$, or of $E$.
For a digit set $\SD$, we say the $j$-th row is vacant if $a_j=0$.
Miao \textit{et al.} \cite{Miao2017}  showed that
if  $E, F\in {\mathcal M}_t(n,m)$  are Lipschitz equivalent,
then either both of them possess vacant rows or neither  of them do.

Denote $N=\#{\SD}$. To remove the trivial case, we will always assume that $N>1$.
According to \cite{Hut81}, there is a unique Borel probability measure $\mu_E$ supported on $E$  satisfying
\begin{equation}\label{eq:Bernoulli}
\mu_E(\cdot)=\frac{1}{N}\sum_{d\in \SD} \mu_E\circ S_d^{-1}(\cdot),
\end{equation}
 and it is called   the \emph{uniform Bernoulli measure} of $E$.
Rao \textit{et al.} \cite{Rao2019}
 found several Lipschitz invariants related to the uniform Bernoulli measure of self-affine carpets.
 They prove that if $E, F\in {\cal M}_t(n,m)$ and $f:~E\to F$ is a bi-Lipschitz map, then $\mu_E$ and $\mu_F\circ f$ are equivalent; consequently,
 $\mu_E$ and $\mu_F$ have the same multifractal spectrum,
and  $\mu_E$ is doubling   if and only if $\mu_F$ is doubling.

 \begin{remark}\label{rem:doubling}{\rm (i) The mulitfractal spectrum of self-affine carpets have been completely characterized, see \cite{King95, Bar07, JR11}.

(ii)  A   measure $\nu$ on a metric space $X$
is said to be \emph{doubling} if there is a constant $C\geq 1$ such that
 $0<\nu(B(x,2r))\leq C\nu(B(x,r))<\infty$
 for all balls $B(x,r)\subset X$ with center $x$ and radius $r>0$.
According to Li, Wei and Wen \cite{LWW16},   $\mu_E$  is doubling
 if and only if  either  $a_0a_{m-1}=0$, or $a_ja_{j+1}=0$ for all $j=0,\dots, m-2$, or  $a_0=a_{m-1}$.
 }
 \end{remark}

 We denote $\sigma=\log m/\log n$.
We shall use ${\cal M}_{t,v,d}(n,m)$ to denote the class of
self-affine carpets in ${\cal M}_t(n,m)$ which possess vacant rows and whose uniform Bernoulli measures are doubling.
Using a kind of measure preserving property, \cite{Rao2019} proved that

\begin{prop}\label{pro:irrational}(\cite{Rao2019})  Let $\sigma \in \Q^c$, and  let  $E, F\in {\cal M}_{t,v,d}(n,m)$. If $E\sim F$,  then the distribution sequence of $E$ is a permutation of that of $F$.
\end{prop}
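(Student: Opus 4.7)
The plan unfolds in three stages. First, invoke the measure-preserving property of \cite{Rao2019}: a bi-Lipschitz $f:E\to F$ makes $\mu_E$ and $\mu_F\circ f$ mutually absolutely continuous with uniformly bounded Radon--Nikodym derivative, so for $\mu_E$-a.e.\ $x\in E$ and every $r>0$ the ball measures satisfy $\mu_E(B(x,r))\asymp\mu_F(B(f(x),r))$. The doubling hypothesis is what promotes the measure equivalence into this pointwise comparison at every scale.

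Second, decode the row distribution from the ball measures. For a Bedford-McMullen carpet, a direct computation on approximate squares yields
\begin{equation*}
\mu_E(B(x,m^{-l}))\asymp N_E^{-l}\prod_{i=\lceil l\sigma\rceil+1}^{l}a_{y_i(x)}^{(E)},
\end{equation*}
where $(y_i(x))_{i\ge 1}$ is the $y$-address of $x$ and $N_E=\#\SD_E$. By the strong law of large numbers under the uniform Bernoulli measure, for $\mu_E$-typical $x$ the empirical frequency of each non-vacant row $j$ in the sequence $(y_i(x))$ converges to $a_j^{(E)}/N_E$. Matching the asymptotic growth of ball measures between $E$ and $F$ along a sufficient family of scales (for instance $\{n^{-k}m^{-l}:k,l\in\N\}$, which is dense in $\R^+$ precisely because $\sigma\in\Q^c$) forces the two empirical distributions, and hence the two multisets of row-probabilities $\{a_j^{(E)}/N_E\}$ and $\{a_j^{(F)}/N_F\}$, to coincide.

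Third, pin down the integer totals. The ball-measure matching carries along the constraint $(N_E/N_F)^\sigma\in\Q^+$. Since $\sigma$ is irrational, a Gelfond--Schneider-type argument combined with the vacancy bound $|J_E|,|J_F|\le m-1$ (where $J_E=\{j:a_j^{(E)}>0\}$) forces $N_E=N_F$, and the equality of row-probability multisets then upgrades to an equality of integer multisets $\{a_j^{(E)}\}=\{a_j^{(F)}\}$. Matching vacancy counts completes the proof that the distribution sequences are permutations of one another.

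The main obstacle is the second stage: translating the ball-measure matching into a statement about row-by-row statistics. The irrationality hypothesis $\sigma\in\Q^c$ is essential here, ensuring that $\log n$ and $\log m$ are $\Q$-linearly independent, so that the hybrid scales $n^{-k}m^{-l}$ are dense enough to resolve individual row frequencies; in the rational case these scales form a discrete multiplicative subgroup and the argument fails at this very step. The complementary difficulty is ensuring that $\mu_E$-typicality is preserved by $f$, which is where the doubling hypothesis and the Radon--Nikodym boundedness are combined.
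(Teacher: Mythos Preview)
The paper does not prove this proposition at all: it is quoted from \cite{Rao2019} and used as a black box to supply the necessity direction of Theorem~\ref{thm:regular}(ii). The only information the present paper offers is the sentence ``Using a kind of measure preserving property, \cite{Rao2019} proved that\ldots'', together with the earlier remark that the measure equivalence $\mu_E\sim\mu_F\circ f$ forces the two uniform Bernoulli measures to have the same multifractal spectrum. So there is no in-paper argument to compare against; your Stage~1 (invoking the measure equivalence from \cite{Rao2019}) is consistent with that one hint.

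Regarding the proposal itself, two of the three stages have genuine gaps. In Stage~2, applying the strong law of large numbers to the $y$-address of a single $\mu_E$-typical point $x$ only yields the one number
\[
\lim_{l\to\infty}\frac{1}{l}\log\mu_E\bigl(B(x,m^{-l})\bigr)
=-\log N_E+(1-\sigma)\sum_{j}\frac{a_j^{(E)}}{N_E}\log a_j^{(E)},
\]
i.e.\ the typical local dimension; it does not recover the full multiset $\{a_j^{(E)}/N_E\}$. Your remedy of probing at the hybrid scales $n^{-k}m^{-l}$ is suggestive, but a Euclidean ball has a single radius, so it is unclear what two-parameter family of measurements you have in mind, and you have not said how the $y$-address statistics of $f(x)$ are controlled. (The route the paper hints at---matching the entire multifractal spectrum, which by King's formula encodes all moments $\sum_j a_j^q$---would close this gap, but that is not the argument you wrote.) In Stage~3, the key constraint $(N_E/N_F)^\sigma\in\Q^+$ is asserted without derivation; it does not follow from equality of the probability multisets, and the ``vacancy bound $|J_E|,|J_F|\le m-1$'' plays no visible role. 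The Gelfond--Schneider step would indeed force $N_E=N_F$ once that rationality constraint is established, but the route to it is missing.
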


We say $E=K(n,m,\SD)$ has \emph{uniform horizontal fibers} if all non-zero $a_j$'s in the distribution sequence take only one value. It is  shown \cite{ Bed84, M84} that $\dim_H E=\dim_B E$ if and only if $E$ has uniform horizontal fibers. (In terms of Falconer \cite{Fa86},
a set is called \emph{regular} if its Hausdorff dimension and box dimension coincide.)
It is seen that if $E$ is a self-affine carpet with uniform horizontal fibers, then
the associated uniform Bernoulli measure must be doubling (see Remark \ref{rem:doubling}(ii)). We shall use ${\cal M}_{t,v,r}$ to denote
 the class of the self-affine carpets in ${\cal M}_{t,v,d}$ with uniform horizontal fibers.

Hence, the vacant row property, the doubling property and the uniform horizontal fibers property divide
the totally disconnected self-affine carpets into six subclasses, and if two self-affine carpets
are Lipschitz equivalent, then they must belong to the same subclass. (We shall use $\bar v$, $\bar d$ and $\bar r$ to denote the negation of the corresponding property.) See Figure \ref{fig:class}.

\begin{figure}
  \includegraphics[width=0.9\textwidth]{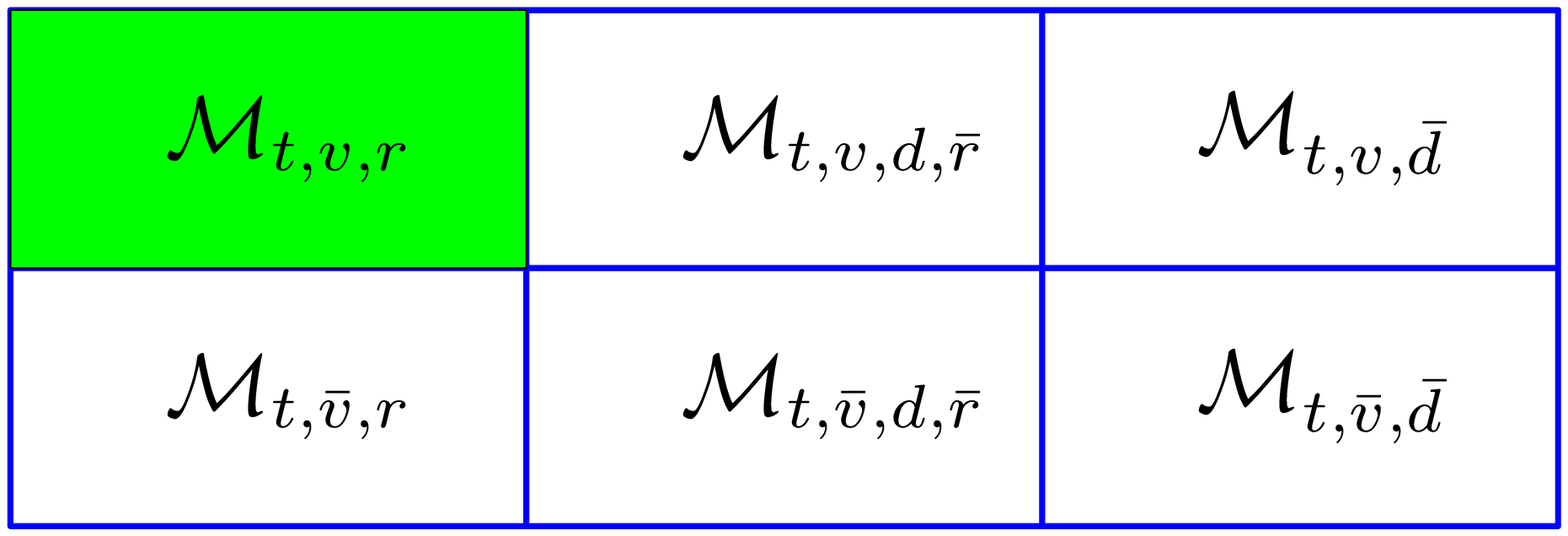}\\
  \caption{The collection ${\cal M}_t$ is divided into six subclasses.}
  \label{fig:class}
\end{figure}

The main goal of the present paper is to characterize the Lipschitz classification of self-affine carpets
in ${\cal M}_{t,v,r}$, the green part in Figure \ref{fig:class}. For this purpose, we use symbolic spaces.
For two sequences $\bx,\bx'\in \Z^\infty$, we use $\bx\wedge \bx'$ to denote their  maximal common prefix.
Let $\xi\in(0,1)$, we define a metric on $\Z^\infty$ by
\begin{equation}\label{eq:dxi}
d_{\xi}(\bx,\bx')=\xi^{|\bx\wedge\bx'|},
\end{equation}
where $|W|$ denotes the length of a word $W$.

Let us denote $\alpha=1/\sigma-\lfloor 1/\sigma \rfloor$ where $\lfloor x\rfloor$ is the greatest integer no larger than $x$.  Let   $s=\#\{j;~a_j>0\}$   be the number of non-vacant rows.
 For $k\geq 1$, define
\begin{equation}\label{eq:nk}
n_k=N s^{\lfloor 1/\sigma \rfloor+\delta_k-1}
\end{equation}
 where $\delta_k=\lfloor k\alpha\rfloor -\lfloor (k-1)\alpha\rfloor$, and set
$$
\Omega=\prod_{k=1}^\infty \{0,1,2,\dots, n_k-1\}.
$$

\begin{remark}{\rm  Notice that if $\sigma\in \Q^c$, then $(\delta_k)_{k\geq 1}$ is a \emph{sturmian sequence} with rotation $\alpha$ (for sturmian sequence, we refer to Chapter 2 of Lothaire \cite{Loth2002}); if $\sigma\in \Q$, then the sequence $(\delta_k)_{k\geq 1}$ is periodic.
}
\end{remark}

\begin{thm}\label{thm:sturmian} If $E=K(n,m,\SD)\in {\cal M}_{t,v,r}$, then
$E\sim (\Omega, d_{1/n})$ where $d_{1/n}$ is defined as \eqref{eq:dxi}. Especially, if $\sigma=p/q \in \Q$, then $E\sim (\{0,1,\dots, N^*-1\}^\infty, d_{1/n^p})$ where $N^*=N^ps^{q-p}$.
\end{thm}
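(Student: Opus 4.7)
The plan is to construct the standard tree of \emph{approximate squares} for $E$, to show that it is levelwise-isomorphic to the product tree underlying $\Omega$, and to check that the resulting identification is bi-Lipschitz. Set $\tau_k=\lfloor k/\sigma\rfloor$, so that a rectangle of horizontal size $n^{-k}$ and vertical size $m^{-\tau_k}$ has sides comparable within a factor of $m$. Using the symbolic coding $(d_i)_{i\geq 1}\in\SD^{\N}$ with $d_i=(x_i,y_i)$, a \emph{level-$k$ approximate square} is a non-empty subset of $E$ obtained by fixing a valid prescription of $(x_1,\dots,x_k)$ together with $(y_1,\dots,y_{\tau_k})$, subject to $(x_i,y_i)\in\SD$ for $i\leq k$ and $a_{y_i}>0$ for $k<i\leq\tau_k$. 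These sets nest as $k$ grows and form a tree.

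The first step is the combinatorial count. Uniform horizontal fibers means every non-vacant row holds the same number $t=N/s$ of digits, so for each valid $(y_1,\dots,y_{\tau_k})$ there are $t^k$ valid $(x_1,\dots,x_k)$, and the total number of level-$k$ approximate squares equals $t^ks^{\tau_k}=N^ks^{(\lfloor 1/\sigma\rfloor-1)k+\lfloor k\alpha\rfloor}=\prod_{j=1}^k n_j$. A more local version of the same count, using $\tau_k-\tau_{k-1}=\lfloor 1/\sigma\rfloor+\delta_k$, shows that each level-$(k-1)$ approximate square has exactly $n_k=Ns^{\lfloor 1/\sigma\rfloor+\delta_k-1}$ children at level $k$. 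Hence the tree of approximate squares is levelwise-isomorphic to the tree underlying $\Omega$.

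Define $\Phi\colon E\to\Omega$ by sending $x$ to the address of the nested approximate squares containing it, with a fixed tie-breaking rule on the countable set of boundary points. The upper Lipschitz bound is immediate: every level-$k$ approximate square $Q$ has $\mathrm{diam}(Q)\leq Cn^{-k}$. The crux is the matching lower bound, namely that distinct level-$k$ approximate squares $Q\neq Q'$ satisfy $\mathrm{dist}(Q\cap E,Q'\cap E)\geq cn^{-k}$ for a uniform $c>0$. Vertical separation is cheap and comes from the vacant-row hypothesis, which forces $\pi_y(E)$ to be a Cantor set in $[0,1]$ and hence produces gaps of definite relative height. Horizontal separation is the main obstacle and uses total disconnectedness, via a compactness argument: because $\delta_k\in\{0,1\}$, the refinement from a level-$(k-1)$ square to its $n_k$ level-$k$ children is combinatorially of one of two types, each producing a finite totally disconnected pattern of pieces inside an ambient rectangle; the minimum pairwise gap of such a pattern depends continuously on the one-parameter aspect-ratio of the ambient rectangle, which ranges over the compact interval $[1/m,1]$, so the overall infimum is strictly positive. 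Combining the upper and lower bounds yields $|x-y|\asymp n^{-|\Phi(x)\wedge\Phi(y)|}$, which is the bi-Lipschitz condition with respect to $d_{1/n}$.

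For the rational case $\sigma=p/q$, the sequence $(\delta_k)$ is periodic with period dividing $p$, and a direct computation using $p\alpha=q-p\lfloor q/p\rfloor$ gives $\prod_{k=1}^p n_k=N^ps^{q-p}=N^*$. Grouping coordinates of $\Omega$ into consecutive blocks of size $p$ therefore realizes $(\Omega,d_{1/n})$ as $(\{0,\dots,N^*-1\}^{\N},d_{1/n^p})$ via a trivial bi-Lipschitz identification (blocks with partial overlap contribute at most a factor $n^p$), yielding the last assertion.
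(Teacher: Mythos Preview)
Your combinatorial count is correct: the tree of approximate squares is indeed a homogeneous tree with parameters $(n_k)_{k\geq 1}$, and your treatment of the rational case is fine once the first assertion is established. The gap is in the metric step, specifically your claim that distinct level-$k$ approximate squares $Q,Q'$ satisfy $d(Q\cap E,\,Q'\cap E)\geq c\,n^{-k}$.

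This lower bound is false in general. Take $n=4$, $m=3$, $\SD=\{(0,0),(1,0),(3,0)\}$; then $E\in{\cal M}_{t,v,r}(4,3)$ (one non-vacant row with three digits, and $a_0=3<4$ gives total disconnectedness by the criterion quoted in the paper). At rank $1$ one has $\ell(1)=1$, and the approximate squares $Q(0,0)=[0,\tfrac14]\times[0,\tfrac13]$ and $Q(1,0)=[\tfrac14,\tfrac12]\times[0,\tfrac13]$ share the edge $\{x=\tfrac14\}$. Since $\tfrac14=0.1\overline{0}_4=0.0\overline{3}_4$ and both expansions use only digits from $\{0,1,3\}$, the point $(\tfrac14,0)$ lies in $E$ and in both squares, so $d(Q(0,0)\cap E,\,Q(1,0)\cap E)=0$. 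Moreover one can choose $x\in Q(0,0)\cap E$ and $y\in Q(1,0)\cap E$, neither on the shared boundary, with $|x-y|$ arbitrarily small while $|\Phi(x)\wedge\Phi(y)|=0$; hence $\Phi$ is not bi-Lipschitz. Your compactness argument misfires because the ``finite pattern of pieces'' inside a parent square need not be separated at all: here the three rank-$1$ children sit at horizontal positions $0,1,3$ in base $4$, and the first two touch. Total disconnectedness of $E$ is a statement about the limit set, not about the approximants $\mathbf{E}_k$, which typically do have nontrivial connected components. (Your argument also only addresses siblings sharing a parent, whereas adjacent rank-$k$ squares can have different rank-$(k-1)$ parents.)

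This is precisely why the paper does not work with the approximate-square tree directly. It instead takes as vertices the connected components of $\mathbf{E}_k$; this tree \emph{is} $1/n$-regular (distinct components are genuinely separated, and the number of approximate squares in a component is uniformly bounded), but it is no longer homogeneous, since a component may contain several approximate squares. The real work is then to show that this component tree is bi-Lipschitz equivalent to the homogeneous tree $T$: the paper passes to $p$-subtrees and constructs a measure-preserving ``bundle map'' from the component tree to $T$, using a combinatorial ``Coin Lemma'' to split the offspring of each component into groups of exactly the right $\mu_E$-measure. Your proposal skips all of this by asserting a separation estimate that does not hold.
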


Theorem \ref{thm:sturmian} says that if $\sigma\in \Q$, then $E\in {\cal M}_{t,v,r}$ is Lipschitz equivalent to a self-similar set, and if $\sigma\in \Q^c$, then $E$ is Lipschitz equivalent to a homogeneous Moran set.
(For  homogeneous Moran set, we refer to  Mauldin and Williams \cite{MW89}, and
Feng, Wen and Wu \cite{Feng}.)

To prove Theorem \ref{thm:sturmian},  we introduce a notion of the structure tree of
a metric space (Section 3).  The nested  structure of a set is an important tool to study  measures and dimensions. However, to study the Lipschitz equivalence, we need to handle the nested structures of two sets;
to overcome this difficulty, we regard a nested structure as a tree.

As a consequence of Proposition \ref{pro:irrational} (necessity) and  Theorem \ref{thm:sturmian} (sufficiency),
we have

\begin{thm}\label{thm:regular} Let  $E, F\in {\cal M}_{t,v,r}(n,m)$.  Then

$(i)$
If   $\sigma \in \Q$,  then $E\sim F$ if and only if   $\dim_H E=\dim_H F$;

$(ii)$ If   $\sigma \in \Q^c$,  then $E\sim F$ if and only if the distribution sequence of $E$ is a permutation of that of $F$.
\end{thm}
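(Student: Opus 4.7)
The plan is to deduce Theorem~\ref{thm:regular} from Proposition~\ref{pro:irrational} (for the necessity when $\sigma$ is irrational), Theorem~\ref{thm:sturmian} (for sufficiency in both cases), and the bi-Lipschitz invariance of Hausdorff dimension (for necessity in the rational case).

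First I would handle the necessity direction. For part~(ii), I simply observe that $\mathcal{M}_{t,v,r}(n,m) \subset \mathcal{M}_{t,v,d}(n,m)$ (as noted in the introduction, uniform horizontal fibers force the uniform Bernoulli measure to be doubling, cf.\ Remark~\ref{rem:doubling}(ii)), so Proposition~\ref{pro:irrational} applies verbatim and forces the distribution sequences of $E$ and $F$ to be permutations of one another. For part~(i), I invoke only the well-known fact that bi-Lipschitz equivalence preserves Hausdorff dimension.

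For sufficiency, the key observation is that the symbolic model $(\Omega, d_{1/n})$ furnished by Theorem~\ref{thm:sturmian} depends on the carpet only through the integers $n$, $m$, $N=\#\SD$ and $s=\#\{j:a_j>0\}$, via the formula \eqref{eq:nk}. In part~(ii), the hypothesis that the distribution sequences are permutations of each other ensures that $E$ and $F$ share the same $N$ and the same $s$; hence Theorem~\ref{thm:sturmian} produces the same model space for both, and by transitivity of $\sim$ we obtain $E \sim F$. In part~(i), writing $\sigma = p/q$, the rational case of Theorem~\ref{thm:sturmian} yields $E \sim (\{0,\ldots,N_E^*-1\}^\infty, d_{1/n^p})$ with $N_E^* = N_E^p\, s_E^{q-p}$, and similarly for $F$. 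A direct calculation with the McMullen formula gives $\dim_H E = \log N_E^*/(p\log n)$ (and the same for $F$), so the assumption $\dim_H E = \dim_H F$ translates into $N_E^* = N_F^*$; the two symbolic Cantor spaces then coincide literally, and hence $E \sim F$.

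I do not expect a substantive obstacle in this step, since the deep content has already been placed into Proposition~\ref{pro:irrational} and Theorem~\ref{thm:sturmian}. The only minor care needed is to confirm that $N^* = N^p s^{q-p}$ is a complete Lipschitz invariant of the symbolic Cantor sets $(\{0,\ldots,N^*-1\}^\infty, d_{1/n^p})$, which is immediate because any two such spaces share the self-similar metric $d_{1/n^p}$ and have Hausdorff dimension $\log N^*/(p\log n)$, so equality of dimension forces equality of $N^*$ and hence of the two spaces.
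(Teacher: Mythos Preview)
Your proposal is correct and follows essentially the same approach as the paper's own proof: necessity in (ii) via Proposition~\ref{pro:irrational}, necessity in (i) via bi-Lipschitz invariance of Hausdorff dimension, and sufficiency in both cases via Theorem~\ref{thm:sturmian} together with the observation that the symbolic model depends only on $N$ and $s$ (and the computation $\dim_H E=\log N^*/(p\log n)$ using the McMullen formula). The only differences are expository: you spell out the inclusion $\mathcal{M}_{t,v,r}\subset\mathcal{M}_{t,v,d}$ and the dimension calculation slightly more explicitly than the paper does.
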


Our third result concerns another symbolization of  self-affine carpets.
We equip $\SD^\infty$ with the metric $\lambda$
defined as
\begin{equation}\label{eq:rho}
\lambda((\bx,\by), (\bx', \by'))=\max\{d_{1/n}(\bx, \bx'), d_{1/m}(\by, \by')\}.
\end{equation}

\begin{thm}\label{thm:symbol} If $E=K(n,m,\SD)\in {\cal M}_{t,v,r}$,  then $E \sim (\SD^\infty, \lambda).$
\end{thm}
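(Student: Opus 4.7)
The plan is to reduce Theorem~\ref{thm:symbol} to Theorem~\ref{thm:sturmian} by producing a bi-Lipschitz bijection $\phi:(\SD^\infty,\lambda)\to(\Omega,d_{1/n})$; transitivity of Lipschitz equivalence then yields $E\sim(\SD^\infty,\lambda)$.

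First I equip $(\SD^\infty,\lambda)$ with a cylinder structure matching that of $\Omega$. Writing $d_k=(i_k,j_k)$, for each integer $k\geq 0$ I define the level-$k$ cylinder of $d$ by
\[
C_k(d)=\{d'\in\SD^\infty : i'_\ell=i_\ell \text{ for } \ell\leq k,\ \ j'_\ell=j_\ell \text{ for } \ell\leq\lfloor k/\sigma\rfloor\}.
\]
These sets partition $\SD^\infty$ at each level $k$ and nest into an infinite rooted tree. Since $E$ has uniform horizontal fibers, every non-vacant row contains exactly $a=N/s$ columns, so the number of level-$k$ cylinders equals $a^{k}s^{\lfloor k/\sigma\rfloor}$. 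Combining $N=as$ with the identity $k\lfloor 1/\sigma\rfloor+\lfloor k\alpha\rfloor=\lfloor k/\sigma\rfloor$ (a direct consequence of $1/\sigma=\lfloor 1/\sigma\rfloor+\alpha$), this count coincides with $n_1 n_2\cdots n_k$, the number of level-$k$ cylinders of $\Omega$. Picking, level by level and in any consistent manner, a bijection between children of corresponding nodes produces a tree isomorphism, which descends to a bijection $\phi:\SD^\infty\to\Omega$ because the cylinders in both spaces shrink to points as $k\to\infty$.

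Next I verify the bi-Lipschitz property. Given $d\neq d'$, let $k$ be the largest integer such that $d$ and $d'$ lie in a common level-$k$ cylinder; then $d_{1/n}(\phi(d),\phi(d'))=n^{-k}$ exactly. Writing $K=|\bi\wedge\bi'|$ and $L=|\bj\wedge\bj'|$, containment in a common $C_k$ forces $K\geq k$ and $L\geq\lfloor k/\sigma\rfloor$, so using $\lfloor k/\sigma\rfloor>k/\sigma-1$ I obtain
\[
\lambda(d,d')\leq m^{-\lfloor k/\sigma\rfloor}\leq m\cdot n^{-k}.
\]
Failure of containment in any level-$(k+1)$ cylinder forces either $K=k$, in which case $\lambda\geq n^{-k}$, or $L\leq\lfloor(k+1)/\sigma\rfloor-1$, in which case $\lambda\geq m^{1-\lfloor(k+1)/\sigma\rfloor}\geq m\cdot n^{-(k+1)}=(m/n)\,n^{-k}$, using $\lfloor(k+1)/\sigma\rfloor\leq(k+1)/\sigma$. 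Thus the ratio $\lambda(d,d')/d_{1/n}(\phi(d),\phi(d'))$ lies in the fixed interval $[m/n,\,m]$ for every pair $d\neq d'$, and $\phi$ is bi-Lipschitz.

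The main obstacle is the combinatorial bookkeeping in Step~1: the refinement scales of $\lambda$ on $\SD^\infty$ are $\{n^{-k}\}\cup\{m^{-\ell}\}$, interleaved in the sturmian order (when $\sigma\in\Q^c$) encoded by $(\delta_k)$, while those of $d_{1/n}$ on $\Omega$ are the uniform sequence $\{n^{-k}\}$. The definition of $C_k(d)$ is designed precisely to aggregate all intermediate $m^{-\ell}$-refinements occurring between the scales $n^{-(k-1)}$ and $n^{-k}$, so that the two coarse-grained trees become isomorphic; once this alignment is in place, the tree isomorphism is immediate and the bi-Lipschitz estimate reduces to the elementary inequalities above.
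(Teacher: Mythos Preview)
Your argument is correct and is essentially the paper's own proof. Your level-$k$ cylinders $C_k(d)$ are exactly the ``approximate squares'' $[\bx,\by]$ of $\SD^\infty$ defined in Section~6, your branching count $n_{k+1}$ per cylinder is the observation that this structure tree is homogeneous with parameters $(n_k)_{k\ge 1}$, and your two-sided estimate $m/n\le \lambda/n^{-k}\le m$ is precisely the $1/n$-regularity check; the paper packages the latter via Theorem~\ref{thm:Xi} and concludes through Theorem~\ref{thm:homo}, while you conclude through Theorem~\ref{thm:sturmian}, which is the same chain up to Lemma~\ref{lem:product}.
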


It is  interesting  to know when  $K(n,m,\SD)\sim (\SD^\infty, \lambda)$.  Yang and Zhang\cite{YZ20} found several other classes
 of self-affine carpets with this property.

%
%

\medskip

The paper is organized as follows. In Section 2, we recall some known results about approximate squares of self-affine carpets. We define structure tree in Section 3 and develop several techniques
to handle structure trees in Section 4. In Section 5, we prove Theorems \ref{thm:sturmian} and \ref{thm:regular}.
Theorem \ref{thm:symbol} is proved in Section 6.

\section{\bf{Approximate squares  of  self-affine carpets}}\label{sec:square}

In this section, we study the structure of totally disconnected  self-affine carpet with vacant rows.
Let $E=K(n,m, \SD)$ be a self-affine carpet.
Denote
\begin{equation}\label{eq:row}
\SE=\{j;~a_j>0\}.
\end{equation}
 Throughout the paper, we will use the  notation
\begin{equation}\label{def:ell}
\ell(k)=\lfloor k/\sigma\rfloor.
\end{equation}

For $\bi=d_1\dots d_k\in \SD^k$, denote $S_{\bi}=S_{d_1}\circ \cdots \circ S_{d_k}$, and we call
 $S_{\bi}([0,1]^2)$ a \emph{basic rectangle of rank} $k$.
    Clearly,
$$S_{\bi}([0,1]^2)=S_{\bi}((0,0))+\left [ 0,\frac{1}{n^k}\right ]\times \left [ 0,\frac{1}{m^k}\right ]. $$
Set
$\widetilde {\mathbf E}_k=\bigcup_{\bi\in \SD^k} S_{\bi}([0,1]^2)$,
then $\widetilde {\mathbf E}_k$ decrease to $E$.

 Let $q\geq 2$ be an integer, and
$x_1\dots x_k\in \{0,1,\dots, q-1\}^k$, we will use the notation
$${0.x_1\dots x_k}|_q=\sum_{j=1}^k x_j q^{-j}.$$
Following
 McMullen \cite{M84}, we  divide a basic rectangle into approximate squares.

\begin{defi} {\rm (\textbf{Approximate squares})
Let $\bx=x_1\dots x_k\in \{0,1,\dots, n-1\}^k$ and $\by=y_1\dots y_{\ell(k)}\in \{0,1,\dots, m-1\}^{\ell(k)}$.
We call
\begin{equation}
{Q}(\bx,\by)=({0.\bx}|_n, {0.\by}|_m)+
 \left [0,\frac{1}{n^k}\right ]\times \left [0,\frac{1}{m^{\ell(k)}}\right ]
\end{equation}
  an  \emph{approximate square}  of rank $k$ of $E$, if $(x_j, y_j)\in \SD$ for $j\leq k$ and
$y_j\in \SE$ for $j>k$.
}
\end{defi}

An approximate square $Q'$ is called an \emph{offspring} of $Q$ if $Q'\subset Q$, and it is called a
\emph{direct offspring}
if the rank of $Q'$ equals the rank of $Q$ plus $1$. For $j\in\SE, $ we denote
$${\cal D}_j=\{i;(i,j)\in{\cal D}\}.$$
The following lemma is obvious, see \cite{Rao2019}.

 \begin{lemma}\label{lem:offspring}  (\cite{Rao2019}) Let $E=K(n,m,\SD)$, and  $Q(\bx, \by)$ be an approximate square of $E$ of rank $k$.

 (i) If $\ell(k)>k$, then  the direct offsprings of $Q(\bx, \by)$ are
 $$
\left \{Q(\bx \ast u, \by \ast \bz);~u\in \SD_{y_{k+1}} \text{ and } \bz\in \SE^{\ell(k+1)-\ell(k)}\right \}
$$
 and $Q(\bx, \by)$ has $a_{y_{k+1}} s^{\ell(k+1)-\ell(k)}$ direct offsprings;

 (ii) If $\ell(k)=k$, then the direct offsprings of $Q(\bx, \by)$ are
$$
\left \{Q(\bx \ast u, \by \ast v\ast  \bz);~ (u, v)\in \SD \text{ and } \bz  \in \SE^{\ell(k+1)-k-1}\right \},
$$
and $Q(\bx, \by)$ has $Ns^{\ell(k+1)-(k+1)}$ direct offsprings.
\end{lemma}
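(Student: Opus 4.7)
The plan is to verify Lemma \ref{lem:offspring} by unpacking the definition of an approximate square and translating the geometric containment $Q' \subset Q$ into symbolic extension conditions. Recall $\ell(k) = \lfloor k/\sigma \rfloor$, so a rank-$k$ approximate square $Q(\bx,\by)$ has horizontal side $n^{-k}$ and vertical side $m^{-\ell(k)}$, while a rank-$(k+1)$ approximate square has horizontal side $n^{-(k+1)}$ and vertical side $m^{-\ell(k+1)}$, with $\ell(k+1) \in \{\ell(k), \ell(k)+1, \ldots\}$ (in the cases relevant to dimension $2\leq m < n$, typically $\ell(k+1) = \ell(k)+1$ or $\ell(k)+2$, but the general formula still applies).

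First I would argue that a rank-$(k+1)$ approximate square $Q(\bx',\by')$ is contained in $Q(\bx,\by)$ if and only if $\bx'$ is a length-$(k+1)$ extension of $\bx$ and $\by'$ is a length-$\ell(k+1)$ extension of $\by$. This is immediate from the expansion formula ${0.\bx'}|_n = {0.\bx}|_n + n^{-k}\cdot{0.u}|_n + \ldots$: the corner points of $Q(\bx',\by')$ must fall in the grid defined by $Q(\bx,\by)$, and the smaller side lengths guarantee actual containment once the prefixes agree. So writing $\bx' = \bx \ast u$ with $u \in \{0,\ldots,n-1\}$ and $\by' = \by \ast y_{\ell(k)+1}\cdots y_{\ell(k+1)}$ parametrizes all geometric offsprings of rank $k+1$.

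Next I would impose the defining constraints of a rank-$(k+1)$ approximate square, namely $(x_j,y_j)\in \SD$ for $j\leq k+1$ and $y_j\in \SE$ for $j>k+1$. Split into the two cases. In case $(i)$, where $\ell(k)>k$, we already have $y_{k+1}\in \SE$ from the rank-$k$ constraint. The pair at position $k+1$ must satisfy $(u, y_{k+1}) \in \SD$, equivalently $u\in \SD_{y_{k+1}}$ (with $a_{y_{k+1}}$ choices), while for the appended $\by$-digits at positions $\ell(k)+1,\dots,\ell(k+1)$, which all lie strictly above index $k+1$, the constraint $y_j\in \SE$ forces each to lie in $\SE$ (giving $s^{\ell(k+1)-\ell(k)}$ choices). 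This exactly reproduces the claimed parametrization and count $a_{y_{k+1}} s^{\ell(k+1)-\ell(k)}$. In case $(ii)$, where $\ell(k)=k$, the $\by$-word must be extended starting from index $k+1$, and the first new entry $v := y_{k+1}$ is now paired with $u$; the constraint $(u,v)\in \SD$ gives $N$ choices for the pair, while the remaining $\ell(k+1)-k-1$ entries $\bz$ of $\by'$ must lie in $\SE$, giving $s^{\ell(k+1)-(k+1)}$ choices, for a total of $N s^{\ell(k+1)-(k+1)}$.

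The routine bookkeeping is the main (and essentially only) burden; there is no genuine obstacle, but one must be careful about which positions are "newly constrained to $\SD$" versus "newly constrained to $\SE$". A clean way to unify both cases is to observe that the conditions on the $\by$-coordinate always partition into one possible pair constraint at position $k+1$ (present in case $(ii)$, absorbed into the already-fixed $y_{k+1}$ in case $(i)$) and a string of free $\SE$-choices for positions strictly exceeding $\max(k+1,\ell(k))$. With this observation, the counts follow directly by multiplying the number of admissible choices at each new position.
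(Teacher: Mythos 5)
Your proposal is correct: the paper itself gives no proof (it declares the lemma obvious and cites Rao--Yang--Zhang), and your argument is exactly the intended direct verification, translating containment of approximate squares into prefix extension and then counting the admissible new digits in each of the two cases. The bookkeeping in both cases (in particular that in case $(i)$ all newly appended $\by$-positions exceed $k+1$ because $\ell(k)\geq k+1$) checks out, so nothing further is needed.
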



\begin{remark}\label{rem:cart} {\rm As a consequence of the above lemma, it is easy to see that

(i) The set of the direct offsprings of an approximate square of  $E=K(n,m,\SD)$   of rank $k$ can be written as
\begin{equation}\label{eq:cartesian}
A\times B+\left [0,\frac{1}{n^{k+1}}\right ]\times \left [0,\frac{1}{m^{\ell(k+1)}}\right ],
\end{equation}
where $A\subset  \Z/n^{k+1} $ and $B \subset \Z/m^{\ell(k+1)}$.

(ii) If $E=K(n,m,\SD)$ has uniform horizontal fibers, then the number of direct offsprings of  an approximate square of  $E$   of rank $k$ is always $Ns^{\ell(k+1)-\ell(k)-1}$ no matter $\ell(k)>k$ or not.
}
\end{remark}

\begin{lemma}\label{lem:measure} Let  $E$ be a self-affine carpet with uniform horizontal fibers. Let $W$  be an approximate square of rank $k$, then $\mu_E(W)=(N^{k}s^{\ell(k)-k})^{-1}$.
\end{lemma}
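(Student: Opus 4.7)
The plan is to compute $\mu_E(W)$ directly, interpreting $\mu_E$ via its standard realization as the pushforward of the uniform Bernoulli product measure on $\SD^\infty$ under the natural coding map (this is the unique solution of \eqref{eq:Bernoulli}). Writing a sample as $(d_j)_{j\geq 1}$ with $d_j = (x_j, y_j)$ iid uniform on $\SD$, the coded point has $n$-ary $x$-digits $(x_j)$ and $m$-ary $y$-digits $(y_j)$, and $\mu_E$ is the law of this point on $E$.

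The main step is to translate membership in $W = Q(\bx, \by)$, with $\bx = x_1'\cdots x_k'$ and $\by = y_1'\cdots y_{\ell(k)}'$, into constraints on the random sequence. Since $\ell(k) \geq k$ (as $\sigma<1$), the coded point lies in $W$ up to a measure-zero boundary set if and only if $(x_j, y_j) = (x_j', y_j')$ for $1 \leq j \leq k$ and $y_j = y_j'$ for $k < j \leq \ell(k)$. By independence of the $d_j$'s,
\[
\mu_E(W) = \prod_{j=1}^k \frac{1}{N} \cdot \prod_{j=k+1}^{\ell(k)} \mathbb{P}(y_j = y_j') = N^{-k} \prod_{j=k+1}^{\ell(k)} \frac{a_{y_j'}}{N},
\]
where the marginal $\mathbb{P}(y_j = j_0) = a_{j_0}/N$ is read off from the definition of the distribution sequence.

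The final step invokes the uniform horizontal fibers hypothesis, which forces $a_j = N/s$ for every $j \in \SE$; combined with the fact that the definition of an approximate square guarantees $y_j' \in \SE$ for $k < j \leq \ell(k)$, each factor in the second product equals $1/s$, yielding
\[
\mu_E(W) = N^{-k} s^{-(\ell(k)-k)} = \frac{1}{N^k s^{\ell(k)-k}},
\]
as claimed. The argument is essentially mechanical once the probabilistic interpretation of $\mu_E$ is in place; the only point requiring attention is to confirm that every $y$-digit indexed by $k < j \leq \ell(k)$ lies in $\SE$, which is built into the definition of approximate square. An alternative route, phrased in the language of the paper, would be induction on $k$ using Remark \ref{rem:cart}(ii) (which gives a constant offspring count of $Ns^{\ell(k+1)-\ell(k)-1}$), provided one separately verifies that all direct offsprings carry equal $\mu_E$-mass; but this verification reduces to the same probabilistic computation, so I prefer the direct calculation above.
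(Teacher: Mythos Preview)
Your proof is correct and is essentially the same computation as the paper's, just phrased in probabilistic language: the paper counts the basic rectangles of rank $\ell(k)$ contained in $W$ (there are $(N/s)^{\ell(k)-k}$ of them, one for each choice of $x_j\in\SD_{y_j'}$ with $k<j\le\ell(k)$) and multiplies by their common measure $N^{-\ell(k)}$, which is exactly your product $\prod_{j=1}^k N^{-1}\cdot\prod_{j=k+1}^{\ell(k)} a_{y_j'}/N$ regrouped. Both arguments hinge on the same point, namely that $y_j'\in\SE$ for $k<j\le\ell(k)$ so that $a_{y_j'}=N/s$.
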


\begin{proof} Let $W=Q(\bx,\by)$ be an approximate square of rank $k$. Then the number of basic rectangles
of rank $\ell(k)$ contained in $Q(\bx,\by)$ is $(N/s)^{\ell(k)-k}$, and hence its measure in $\mu_E$
is $(N/s)^{\ell(k)-k}/N^{\ell(k)}$, the lemma is proved.
\end{proof}

Let ${\mathbf E}_k $ be the union of all approximate squares of rank $k$.
It is seen that $({\mathbf E}_k)_{k\geq 1}$ is a decrease sequence and   $E=\bigcap_{k=1}^\infty {\mathbf E}_k $.
Let $U$ be a connected component of $\mathbf E_k$.
 Hereafter, we will  call $U$  a \emph{component of $\mathbf E_k$} for simplicity. An approximate square of rank $k$ contained in $U$ will be called a \emph{member} of $U$.
  Denote by $\#_k(U)$ the number of members of $U$. It is shown that $\#_k(U)$ has an upper bound which
  is independent of $k$.

\begin{lemma}\label{lem:cartesian} (\cite{Rao2019})
Let $E=K(n,m ,{\cal D})$ be totally disconnected and possess  vacant rows. Then there exists $L_0>0$ such that
for every $k\geq 1$ and   every  component $U$ of $\mathbf E_k$, it holds that $\#_k(U)\leq  L_0$.
\end{lemma}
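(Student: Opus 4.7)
Since this lemma is cited from \cite{Rao2019}, my plan is to reconstruct the argument by combining a compactness argument exploiting total disconnectedness with the vacant-row hypothesis, then counting via a pigeonhole.

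First, because $E$ is compact and totally disconnected, for every $\varepsilon > 0$ there is a rank $p_0 \geq 1$ such that every connected component of $\widetilde{\mathbf E}_{p_0}$ has diameter less than $\varepsilon$. Taking $\varepsilon$ small enough that distinct components of $\widetilde{\mathbf E}_{p_0}$ are separated by some $\delta_0 > 0$, and then applying $S_{\bi}$ for $\bi \in \SD^k$, the self-affine structure propagates this bound to every scale: every connected component of $\widetilde{\mathbf E}_{k+p_0}$ contained in a single rank-$k$ basic rectangle has horizontal extent at most $\varepsilon/n^k$ and vertical extent at most $\varepsilon/m^k$.

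Second, to upgrade this to a global bound (i.e., for components not confined to a single rank-$k$ basic rectangle), I would use the vacant-row hypothesis. Fix any $j_0 \in \{0, \ldots, m-1\} \setminus \SE$; then $E$ avoids the open strip $\{(x,y) : j_0/m < y < (j_0+1)/m\}$, and by self-similarity analogous horizontal gaps of height $m^{-r}$ exist inside every rank-$(r-1)$ basic rectangle. These transverse gaps prevent vertical chaining of rank-$k$ basic rectangles across the dyadic scales between $n^{-k}$ and $m^{-k}$. Combined with the local bound from the first step, they confine any connected component of $\widetilde{\mathbf E}_{\ell(k)}$ to a box of dimensions $O(n^{-k}) \times O(m^{-\ell(k)})$.

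Finally, since each approximate square of rank $k$ is a union of rank-$\ell(k)$ basic rectangles contained in a single rank-$k$ basic rectangle, the inclusion $\mathbf E_k \subseteq \widetilde{\mathbf E}_{\ell(k)}$ implies that a component $U$ of $\mathbf E_k$ sits inside a component of $\widetilde{\mathbf E}_{\ell(k)}$, which by the above fits inside a box of size comparable to a single approximate square. Since members of $U$ have disjoint interiors and each occupies area exactly $n^{-k} \cdot m^{-\ell(k)}$, a pigeonhole count yields the uniform bound $\#_k(U) \leq L_0$. The main obstacle is the second step: coupling the local diameter bound from total disconnectedness with the vacant-row gaps to obtain global control is delicate, because the gap scales $m^{-r}$ and the approximate-square height $m^{-\ell(k)}$ must be matched correctly across the hierarchy of scales between $n^{-k}$ and $m^{-k}$.
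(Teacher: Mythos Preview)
The paper does not prove this lemma; it is simply quoted from \cite{Rao2019}. So there is nothing to compare your argument against here beyond assessing its internal soundness.

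Your proposal has the right ingredients (a compactness consequence of total disconnectedness, plus the horizontal gaps coming from vacant rows), and your first step is correct: if every $\widetilde{\mathbf E}_{k}$ had a component of diameter $\geq \varepsilon$, a K\"onig's-lemma/nested-compactness argument would produce a nontrivial connected subset of $E$. However, the later steps contain a concrete error and a genuine gap.

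The error is in Step~3. An approximate square $Q(\bx,\by)$ of rank $k$ is a \emph{solid} rectangle of size $n^{-k}\times m^{-\ell(k)}$; it is \emph{not} a union of rank-$\ell(k)$ basic rectangles, because for $k<j\le \ell(k)$ the horizontal digit $x_j$ is unconstrained in $Q(\bx,\by)$, whereas membership in $\widetilde{\mathbf E}_{\ell(k)}$ forces $x_j\in\SD_{y_j}$ (and $a_{y_j}<n$ by Remark~\ref{lem:disconnected}). Hence the inclusion $\mathbf E_k\subseteq \widetilde{\mathbf E}_{\ell(k)}$ is false in general, and the reduction ``component of $\mathbf E_k$ $\subset$ component of $\widetilde{\mathbf E}_{\ell(k)}$'' collapses. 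The valid inclusion is $\mathbf E_k\subseteq \widetilde{\mathbf E}_{k}$, but components of $\widetilde{\mathbf E}_{k}$ can have vertical extent of order $m^{-k}$, far larger than $m^{-\ell(k)}$, so this does not give the box bound you want without further work.

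The gap is the one you yourself flag in Step~2: the passage from the local diameter bound (inside a single rank-$k$ basic rectangle) to a global bound on components of $\mathbf E_k$ requires controlling both horizontal chaining across adjacent rank-$k$ rectangles and vertical chaining across the scales $m^{-k},\dots,m^{-\ell(k)}$. The vacant-row gaps do provide horizontal cuts at every scale $m^{-r}$, but you have not shown how to thread these together with the Step~1 bound (which lives at the fixed depth $p_0$) to get a uniform-in-$k$ conclusion; and with the Step~3 inclusion gone, there is no longer a clean pigeonhole at the end. As written, the argument does not close.
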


We shall denote by ${\mathcal C}_{E,k}$ the collection of components of $\mathbf E_k$, and set ${\mathcal C}_E=\bigcup_{k\geq 0} {\mathcal C}_{E,k}$, where we set $\mathbf E_0=[0,1]^2$ by convention.

\begin{remark}\label{lem:disconnected}{\rm  For self-affine carpets possessing vacant rows, there is a simple   criterion for totally disconnectedness (\cite{Rao2019}):
Let $E=K(n,m,{\mathcal D})$ and ${\mathcal D}$ possess vacant rows. Then $E$ is totally disconnected if and only if $ a_j<n$ for all $0\leq j\leq m-1$.
}
\end{remark}

\section{\textbf{Structure tree of a metric space}}\label{sec:tree}
 In this section, we introduce a structure tree   to describe the nested structure
 of a metric space.

\subsection{Tree and boundary} Let $T$ be a tree with a root and we denote the root by $\phi$.
Let  $v, v'$ be two vertices of $T$. The \emph{level} of $v$ is the distance from the root $\phi$ to $v$,
and we denote it by $|v|$.
 We say $v'$ is a \emph{direct offspring} of $v$, if there is an edge from $v$ to $v'$, and
 $|v'|=|v|+1$, and meanwhile, we say $v$ is the \emph{parent} of $v'$.
 We say $v'$ is an  offspring of $v$ if there is a path from $v$ to $v'$.

In this paper, we always assume that
any vertex of $T$ has at least one direct offspring, and the number of direct offsprings of a vertex is finite.
The \emph{boundary} of  $T$, denoted by $\partial T$, is defined to be
the collection of infinite path emanating from the root; we shall denote  such a path by $\bv=(v_k)_{k=0}^\infty$, where $v_k$ is a vertex of level $k$ in the path. In what follows, an infinite path always means that
a path emanating from the root.
Given $0<\xi<1$, we  equip  $ \partial T $ with the metric
$$
d_\xi(\bu,\bv)=\xi^{|\bu\wedge \bv|}.
$$

 \subsection{Structure trees of self-affine carpets.}
The following is an alternative description of a nested structure of a metric space.

\begin{defi}{\rm Let $X$ be a compact metric space and let  $T$ be a rooted tree.
 We call $T$ a \emph{structure tree} of $X$ if

 $(i)$ the root is $\phi=X$ and every vertex of $T$ is a closed subset of $X$;

 $(ii)$  the vertices of the same level are disjoint;

 $(iii)$ if  $\{v_1,\dots, v_p\}$ is the set of  direct offsprings of $v$, then $v=\bigcup_{j=1}^p v_j$.
 }
 \end{defi}

 \begin{example}\label{exam:self} {\rm Let $T$ be a rooted tree. Let $\partial T$ be the boundary of $T$ equipped with a metric $d_{\xi}$.
 If we identify  a vertex $v$ as the subset of $\partial T$ consisting of the infinite  paths passing $v$,
 then $T$ is a structure tree of $\partial T$.
 }
\end{example}

\begin{example}\label{exam:self} {\rm Here  we give two  structure trees of $E= K(n,m,{\mathcal D})$.

   \textbf{The first structure tree.} Let $T_1$ be a tree such that the  vertices  of level $k$ are  $W\cap E$, where $W$ runs over the approximate squares in ${\mathbf E}_k$. We set an edge from vertex $u$ to $v$
if $v\subset u$ and $|v|=|u|+1$.
 Clearly $T_1$ is a structure tree of $E$.

   \textbf{The second structure tree.} Let $T_2$ be a tree such that the vertices of  level $k$ are $U\cap E$, where $U$ runs over the components of ${\mathbf E}_k$.
 Then the  vertex set of $T_2$ is ${\mathcal C}_E\cap E=\bigcup_{k\geq 0}\left ( {\mathcal C}_{E,k}\cap E\right )$.  We define an edge from $u$ to $v$ if $v\subset u$ and $|v|=|u|+1$.
 We shall call $T_2$ the \emph{structure tree of $E$ induced by ${\mathcal C}_E$}.
 For simplicity, we sometimes say that  a component
 $U$ is a vertex of $T_2$ instead of $U\cap E$.
 }
\end{example}

\subsection{Regularity}
To study the Lipschitz classification,  we wish a structure tree has nice separation property. This motivates us to give the following definition.
Let $(X,d)$ be a metric  space. For two set $A, B\subset X$, we define
$d(A,B)=\inf\{d(a,b);~a\in A, b\in B\}.$

\begin{defi}{\rm Let $T$ be a structure tree of the compact metric space $(X,d)$.
If there exist a real number $0<\xi<1$ and a constant $\alpha_0>0$
such that,  for any $k\geq 0$ and any vertices $u,v$ of level $k$,
$$
\text{diam}~u\leq \alpha_0 \xi^k \ \text{ and } \
d(u, v)\geq \alpha_0^{-1} \xi^k,
$$
then we say $T$ is  $\xi$-regular.
}
\end{defi}

Similar idea has been appeared in Jiang, Wang and Xi \cite{Jiang19}, where a $\xi$-regular structure tree of $X$ is called a \emph{configuration} of $X$ there. The following result is essentially contained in \cite{Jiang19}.

\begin{theorem}\label{thm:Xi} Let $(X,d)$ be a compact metric space and
let $T$ be a structure tree of $X$.
If $T$ is $\xi$-regular, then  $(X,d)\sim (\partial T,d_\xi)$.
\end{theorem}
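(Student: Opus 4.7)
\medskip

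The plan is to construct an explicit bi-Lipschitz bijection $f:\partial T\to X$ from the tree structure itself, and then read off the bi-Lipschitz constant directly from the $\xi$-regularity parameters $\alpha_0$ and $\xi$.

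First I would define the map. For an infinite path $\bv=(v_k)_{k=0}^\infty\in\partial T$, the vertices $v_0\supseteq v_1\supseteq v_2\supseteq\cdots$ are a nested sequence of closed subsets of the compact space $X$, and $\xi$-regularity forces $\text{diam}\,v_k\leq \alpha_0\xi^k\to 0$. Hence $\bigcap_{k\geq 0} v_k$ is a single point, which I call $f(\bv)$. To see that $f$ is a bijection, for each $x\in X$ I would use property (iii) of a structure tree inductively: since $v=\bigcup_j v_j$ over the direct offsprings, $x$ lies in some direct offspring at every level, and since by property (ii) vertices of the same level are disjoint, this offspring is unique. This produces the unique path $\bv(x)$ with $f(\bv(x))=x$, giving both surjectivity and, by the disjointness again, injectivity.

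Next I would verify the bi-Lipschitz estimates. Fix two distinct paths $\bu,\bv\in\partial T$ and set $k=|\bu\wedge\bv|$, so $u_k=v_k$ but $u_{k+1}$ and $v_{k+1}$ are distinct vertices of level $k+1$. For the upper bound, $f(\bu),f(\bv)\in u_k=v_k$ gives
$$d(f(\bu),f(\bv))\leq \text{diam}\,v_k\leq \alpha_0\xi^k=\alpha_0\, d_\xi(\bu,\bv).$$
For the lower bound, $f(\bu)\in u_{k+1}$ and $f(\bv)\in v_{k+1}$ with $u_{k+1}\neq v_{k+1}$ gives, by $\xi$-regularity applied at level $k+1$,
$$d(f(\bu),f(\bv))\geq d(u_{k+1},v_{k+1})\geq \alpha_0^{-1}\xi^{k+1}=\alpha_0^{-1}\xi\cdot d_\xi(\bu,\bv).$$
Setting $C=\alpha_0/\xi$ yields $C^{-1}d_\xi(\bu,\bv)\leq d(f(\bu),f(\bv))\leq C\,d_\xi(\bu,\bv)$, which is the required bi-Lipschitz equivalence.

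There is no serious obstacle here once the $\xi$-regularity is in hand: the two inequalities defining regularity are tailor-made, one for the upper and one for the lower bi-Lipschitz bound, and the disjointness axiom (ii) together with covering axiom (iii) is precisely what makes $f$ a well-defined bijection. The one point that deserves attention is checking that $f(\bu)\ne f(\bv)$ whenever $\bu\ne\bv$; this follows from the lower bound itself, or equivalently from disjointness at the first level where the paths differ. In fact the argument shows a slightly stronger statement: the tree map $f$ is a $\xi$-regular tree parametrisation of $X$, which is the only property of $X$ used later in the paper when $T$ is the structure tree of a self-affine carpet.
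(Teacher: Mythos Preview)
Your proof is correct and follows essentially the same route as the paper: define $f:\partial T\to X$ by $f(\bv)=\bigcap_k v_k$, then read off the upper and lower bi-Lipschitz bounds from the diameter and separation estimates at levels $k$ and $k+1$ respectively, where $k=|\bu\wedge\bv|$. Your version is in fact slightly more detailed than the paper's, since you explicitly invoke axioms (ii) and (iii) to justify that $f$ is a bijection and you compute the Lipschitz constant $C=\alpha_0/\xi$ explicitly, whereas the paper leaves these points implicit.
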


\begin{proof} For any $x\in X$, there is a unique infinite path $(v_k)_{k\geq 0}$
such that $\{x\}=\bigcap_{k\geq 0} v_k$.  Denote by $f$ the map from $\partial T$ to $X$ defined by
$f((v_k)_{k\geq 0})=x$.
We claim that $f$ is  bi-Lipschitz.
Pick  $(u_k)_{k\geq 0}, (v_k)_{k\geq 0}\in \partial T$,  and denote $x=f((u_k)_{k\geq 0}), y=f((v_k)_{k\geq 0})$.
Let $q$ be the length of common prefix of $(u_k)_{k\geq 0}$ and $(v_k)_{k\geq 0}$, then
$d_\xi((u_k)_{k\geq 0}, (v_k)_{k\geq 0})=\xi^q$  and   $x,y\in u_q.$
Since $T$ is $\xi$-regular,
we have
$$
\alpha_0^{-1}\xi^{q+1} \leq d_\xi(u_{q+1},v_{q+1})\leq d_{\xi}(x,y)\leq \text{diam}(u_q) \leq \alpha_0\xi^q,
$$
which implies that $f$ is bi-Lipschitz.
\end{proof}

\section{\textbf{$p$-Subtree and bundle map}}

In this section, we develop several techniques on structure tree.

\subsection{$p$-subtree}
Now we consider a special `subtree' of a rooted tree $T$.
Let $p\geq 2$ be an integer.
Let $T^*$ be the tree whose vertices of level $k$ consist of
  the vertices of $T$ of level $pk$, $k\geq 0$.
  For two vertices $u,v\in T^*$,  $v$
  is a direct offspring of $u$ if $v$ is a $p$-step offspring of $u$ in $T$ (that is, $v$ is an offspring of $u$ and $|v|=|u|+p$).
   We call $T^*$ the \emph{$p$-subtree} of $T$. The following result is obvious.

   \begin{theorem}\label{thm:subtree} For any $\xi\in (0,1)$, it holds that  $(\partial T, d_\xi)\sim (\partial T^*, d_{\xi^p})$.
   \end{theorem}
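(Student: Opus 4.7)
The plan is to construct an explicit bijection $\Phi : \partial T \to \partial T^*$ and then estimate how common-prefix lengths transform under $\Phi$. First I would define $\Phi$ by $\Phi((v_k)_{k\geq 0}) = (v_{pk})_{k \geq 0}$. This is well defined because, by the definition of $T^*$, each $v_{pk}$ sits at level $k$ in $T^*$ and $v_{p(k+1)}$ is a $p$-step offspring of $v_{pk}$ in $T$, i.e.\ a direct offspring in $T^*$. For the inverse, given a path $(w_k)_{k\geq 0} \in \partial T^*$, the tree structure of $T$ guarantees that for each $k$ there is a \emph{unique} path of length $p$ in $T$ from the ancestor $w_k$ to the descendant $w_{k+1}$; concatenating these gives the unique infinite path in $T$ whose $p$-th terms are the $w_k$. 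Hence $\Phi$ is a bijection.

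Next I would compare common prefix lengths. Take $\bu = (u_k)_{k\geq 0}$ and $\bv = (v_k)_{k\geq 0}$ in $\partial T$ with $|\bu \wedge \bv| = q$, and write $q = pj + r$ with $0 \leq r < p$. Since $u_i = v_i$ for all $i \leq q$, in particular $u_{pk} = v_{pk}$ for $k \leq j$, so $|\Phi(\bu) \wedge \Phi(\bv)| \geq j$. Conversely, if one had $u_{p(j+1)} = v_{p(j+1)}$, then the uniqueness of the path in $T$ from the common ancestor $u_{pj}=v_{pj}$ to this common descendant would force $u_i = v_i$ for all $i \leq p(j+1)$, contradicting $u_{q+1} \neq v_{q+1}$ (since $q+1 \leq p(j+1)$). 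Therefore $|\Phi(\bu) \wedge \Phi(\bv)| = j = \lfloor q/p \rfloor$.

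Finally, from the explicit formulas $d_\xi(\bu,\bv) = \xi^q$ and $d_{\xi^p}(\Phi(\bu), \Phi(\bv)) = \xi^{pj}$, together with $pj \leq q < p(j+1)$, I get
$$
\xi^{p} \cdot d_{\xi^p}(\Phi(\bu), \Phi(\bv)) < d_{\xi}(\bu, \bv) \leq d_{\xi^p}(\Phi(\bu), \Phi(\bv)),
$$
so $\Phi$ is bi-Lipschitz with constant at most $\xi^{-p}$, proving $(\partial T, d_\xi) \sim (\partial T^*, d_{\xi^p})$.

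The argument is essentially mechanical; the only point that requires care is verifying that $\Phi^{-1}$ is well defined, which rests on the tree axiom that the path from a vertex to any of its descendants is unique. Once the bijection is in place, the metric comparison is a direct one-line computation from the definition of $d_\xi$ and the floor inequality.
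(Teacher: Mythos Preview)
Your proof is correct and follows essentially the same approach as the paper: define the map $\Phi((v_k)_{k\geq 0})=(v_{pk})_{k\geq 0}$, observe it is a bijection, and compare common-prefix lengths to obtain the bi-Lipschitz bound. The paper's version is terser (it simply asserts bijectivity and the inequality $\xi^{pt+p-1}\leq d_\xi(\bu,\bv)\leq \xi^{pt}$ with $t=|\Phi(\bu)\wedge\Phi(\bv)|$), whereas you supply the justification via uniqueness of tree paths; the content is the same.
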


  \begin{proof} Let $f:\partial T\to \partial T^*$ be a map defined by $f((v_k)_{k\geq 0})=(v_{pk})_{k\geq 0}$.
   Clearly, $f$ is a bijection.
   Pick any $(u_k)_{k\geq 0},(v_k)_{k\geq 0}\in \partial T$, let $t$ be the length of the
    common prefix of $(u_{pk})_{k\geq 0}$ and $(v_{pk})_{k\geq 0}$,
    then
   $d_{\xi^p}(f((u_k)_{k\geq 0})),f((v_k)_{k\geq 0}))=\xi^{pt}$   and
   $\xi^{pt+p-1}\leq d_\xi( (u_k)_{k\geq 0}, (v_k)_{k\geq 0})\leq \xi^{pt},$
 so  $f$ is bi-Lipschitz.
  \end{proof}

   \subsection{bundle map} Let $T$ be a tree.  We call $B=\{v_1,\dots, v_t\}$ a \emph{bundle} of $T$ of level $k$, if $v_1,\dots, v_t$ are vertices of
$T$ of level $k$ and they sharing the same parent.

    Let $S$ and $T$ be two trees, and  let ${\cal C}_k$ and ${\cal C}'_k$  be the sets of vertices of level $k$ of them respectively.
    A map $\Delta$ defined on $\bigcup_{k\geq 0} {\cal C}_k$ is called a \emph{bundle map} from $S$ to $T$, if for each $k\geq 0$,
    it holds that

    $(i)$ For $u\in {\cal C}_k$, $\Delta(u)$ is a bundle of $T$ of level $k$;

    $(ii)$ $\{\Delta(u);~u\in{\cal C}_k\}$ is a partition of ${\cal C}'_k$;

    $(iii)$ If $u'$ is an offspring of $u$, then elements in $\Delta(u')$ are offsprings of elements in $\Delta(u)$.

 For a vertex $w$ of $T$, we use $[w]$ to denote the set of infinite paths in $\partial T$ passing $w$;
 if $B$ is a bundle of $T$, we denote $[B]=\bigcup_{w\in B}[w]$.

    \begin{theorem}\label{thm:bundle} Let $\xi\in(0,1)$, and let $S$ and $T$ be two trees. If there is a bundle map $\Delta$ from $S$ to $T$, then $(\partial S,d_{\xi})\sim (\partial T,d_{\xi})$.
    \end{theorem}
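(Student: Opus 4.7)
The plan is to construct a bi-Lipschitz bijection $f: \partial T \to \partial S$ directly from the bundle map $\Delta$. Property (ii) is tailor-made for this: the collection $\{\Delta(u) : u \in {\cal C}_k\}$ partitions the level-$k$ vertices of $T$, so for each $\bw = (w_k)_{k\geq 0} \in \partial T$ and each $k$ there is a unique $u_k \in {\cal C}_k$ with $w_k \in \Delta(u_k)$. I set $f(\bw) = (u_k)_{k\geq 0}$.

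First I would verify that $(u_k)_{k\geq 0}$ is an infinite path in $S$. Let $u_k^*$ denote the parent in $S$ of $u_{k+1}$; by (iii) applied to the direct-offspring pair $(u_k^*, u_{k+1})$, every element of $\Delta(u_{k+1})$ is a direct offspring of some element of $\Delta(u_k^*)$. In particular, the parent $w_k$ of $w_{k+1}$ lies in $\Delta(u_k^*)$, so the uniqueness in (ii) forces $u_k = u_k^*$, which is to say $u_{k+1}$ is a direct offspring of $u_k$.

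Next I would prove the bi-Lipschitz estimates by comparing common-prefix lengths. Set $q = |\bw \wedge \bw'|$, $\bu = f(\bw)$, and $\bu' = f(\bw')$. Since $w_k = w_k'$ for $k \leq q$, uniqueness in (ii) forces $u_k = u_k'$ for $k \leq q$, so $|\bu \wedge \bu'| \geq q$ and $d_\xi(f(\bw), f(\bw')) \leq d_\xi(\bw, \bw')$. Conversely, if $|\bu \wedge \bu'| \geq q+2$, then $w_{q+2}$ and $w_{q+2}'$ would lie in the same bundle $\Delta(u_{q+2})$ and therefore share a parent in $T$; but that parent is simultaneously $w_{q+1}$ and $w_{q+1}'$, contradicting $w_{q+1} \neq w_{q+1}'$. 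Hence $|\bu \wedge \bu'| \leq q+1$ and $d_\xi(f(\bw), f(\bw')) \geq \xi \cdot d_\xi(\bw, \bw')$.

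Injectivity of $f$ is immediate from the bi-Lipschitz bound, so the only real obstacle is surjectivity, and this is where I expect the main subtlety. Condition (iii) constrains $\Delta(u')$ only from above (as descendants of $\Delta(u)$), so a given element of $\Delta(u_k)$ may have no child in $\Delta(u_{k+1})$, and lifting a path $(u_k) \in \partial S$ to $T$ therefore requires a compactness argument. Concretely, given $(u_k)_{k\geq 0} \in \partial S$, I would form the rooted subtree $T'$ of $T$ whose level-$k$ vertex set is $\Delta(u_k)$, with edges inherited from $T$. Property (iii) guarantees each vertex of $T'$ has its parent in $T'$; nonemptiness of each $\Delta(u_k)$ makes $T'$ infinite; and finite branching of $T$ makes $T'$ locally finite. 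K\"onig's lemma then produces an infinite path in $T'$ from the root, whose image under $f$ is $(u_k)_{k\geq 0}$, completing the proof.
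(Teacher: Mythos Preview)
Your argument is correct. The map $f:\partial T\to\partial S$ you build is precisely the inverse of the map the paper constructs, and your bi-Lipschitz estimate $\xi\,d_\xi(\bw,\bw')\le d_\xi(f(\bw),f(\bw'))\le d_\xi(\bw,\bw')$ is exactly the content of the paper's regularity bounds $\text{diam}([\Delta(u)])\le\xi^{k-1}$ and $d_\xi([\Delta(u)],[\Delta(v)])\ge\xi^k$.

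The difference is packaging. The paper observes that the sets $[\Delta(u)]$, $u\in{\cal C}_k$, form a structure tree $T^*$ of $\partial T$ that is tree-isomorphic to $S$ (hence $\partial S$ is isometric to $\partial T^*$), checks that $T^*$ is $\xi$-regular, and then invokes Theorem~\ref{thm:Xi} to conclude $(\partial T^*,d_\xi)\sim(\partial T,d_\xi)$. This is more modular: once the structure-tree machinery is in place, the proof is two lines, and the surjectivity issue you highlight is absorbed into the compactness argument already present in Theorem~\ref{thm:Xi} (the nested closed sets $[\Delta(u_k)]$ have diameters $\to 0$, so their intersection is a single point of $\partial T$). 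Your route is more self-contained and elementary: you bypass the structure-tree language entirely, prove the Lipschitz bounds by hand, and replace the compactness step with K\"onig's lemma. Both arguments are short; the paper's buys reusability of Theorem~\ref{thm:Xi}, yours buys independence from that framework.
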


    \begin{proof}
  The map $\Delta$ induces a structure tree of $(\partial T, d_{\xi})$, which we will denote by $T^{*}$,  in the following way: the vertices of $T^{*}$ of level $k$ are
  $$\left \{[\Delta(u)];~ u\in {\cal C}_{k}\right \} .$$
 Clearly   $\Delta$ induces an isometry from $(\partial S,d_{\xi})$   to  $(\partial T^{*},d_{\xi})$.
  We claim that $T^*$ is $\xi$-regular.
Indeed, if $u$ and $v$ are vertices  of $S$ of level $k$, we have
$$
\text{diam}([\Delta(u)])\leq \xi^{k-1} \ \text{and} \ d_\xi([\Delta(u)],[\Delta(v)])\geq \xi^k.
$$
 So $(\partial T^{*},d_{\xi}) \sim (\partial T,d_{\xi})$ and the theorem is proved.
   \end{proof}

   \subsection{Homogeneous tree}
   Let $T$ be a tree with root $\phi$. If  every vertex of level $k-1$  has $n_k$ number of direct offsprings where $n_k\geq 1$, then
we call $T$ a \emph{homogeneous tree} with parameter $(n_k)_{k\geq 1}$.

A homogeneous tree can be regarded as a symbolic space.

\begin{lemma}\label{lem:product} Let $T$ be a homogeneous tree with parameters $(n_k)_{k\geq 1}$. Then
$$
(\partial T, d_\xi)\sim (\Omega, d_\xi),
$$
where $\Omega=\prod_{k=1}^\infty \{0, 1,\dots, n_k-1\}$.
\end{lemma}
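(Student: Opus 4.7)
The plan is to construct an explicit isometry from $(\Omega, d_\xi)$ to $(\partial T, d_\xi)$, which immediately yields Lipschitz equivalence. The construction proceeds by fixing, at each vertex of $T$, an ordering of its direct offsprings.

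First, for each vertex $v$ of $T$ at level $k-1$ (with $k\geq 1$), I would fix a bijection $\varphi_v : \{0,1,\dots,n_k-1\} \to \{\text{direct offsprings of } v\}$; this is possible precisely because $T$ is homogeneous with parameters $(n_k)_{k\geq 1}$. Then define $f: \Omega \to \partial T$ recursively by $f((x_k)_{k\geq 1}) = (v_k)_{k\geq 0}$, where $v_0 = \phi$ and $v_k = \varphi_{v_{k-1}}(x_k)$ for $k\geq 1$. A straightforward induction on $k$ shows that $f$ is a bijection, since every infinite path emanating from the root is uniquely determined by its sequence of branching choices, and conversely every sequence of such choices gives a valid infinite path.

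Next, I would verify that $f$ is in fact an isometry. Take $\bx = (x_k), \bx' = (x_k') \in \Omega$ with $|\bx \wedge \bx'| = j$, so $x_k = x_k'$ for $1\leq k\leq j$ and $x_{j+1}\neq x_{j+1}'$. By induction on $k$ one gets $v_k = v_k'$ for $0\leq k\leq j$, and since $\varphi_{v_j}$ is injective, $v_{j+1}\neq v_{j+1}'$. Hence $|f(\bx)\wedge f(\bx')| = j$, so $d_\xi(f(\bx), f(\bx')) = \xi^j = d_\xi(\bx,\bx')$.

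A bijective isometry is certainly bi-Lipschitz, establishing $(\partial T, d_\xi)\sim (\Omega, d_\xi)$. There is no real obstacle here: the lemma is essentially a bookkeeping statement saying that a homogeneous tree is combinatorially determined by its branching parameters, while the metric $d_\xi$ only detects common-prefix length. As an alternative, one could realize $\Omega$ as the boundary of a tree $S$ whose level-$k$ vertices are the words in $\prod_{j=1}^{k}\{0,\dots,n_j-1\}$ and then invoke Theorem~\ref{thm:bundle} with each bundle $\Delta(u)$ a singleton, but the direct construction above is more transparent.
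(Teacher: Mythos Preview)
Your proof is correct and follows essentially the same approach as the paper: both construct an isometry by labeling the vertices of $T$ with finite words in $\prod_{j=1}^k\{0,\dots,n_j-1\}$ (the paper states this in one sentence, while you spell out the recursive labeling via the bijections $\varphi_v$). Your additional remark about the alternative via Theorem~\ref{thm:bundle} is not in the paper but is a valid aside.
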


\begin{proof}   Clearly we can label a vertex $u\in T$ of level $k$ as
$i_1\dots i_k\in \prod_{j=1}^k \{0,1,\dots, n_j-1\}$, and  the direct
offsprings of $i_1\dots i_k$ are $i_1\dots i_ki_{k+1}$ where $i_{k+1}\in \{0,1,\dots, n_{k+1}-1\}$.
Hence this labeling gives us an isometry between $\partial T$ and $\Omega$.
\end{proof}

\section{\textbf{Proof of Theorem \ref{thm:regular}}}\label{sec:regular}

In this section, we prove Theorem \ref{thm:regular}.
Let $E=K(n,m, \SD)\in {\mathcal M}_{t,v,r}$.
Recall that $N=\#\SD$ and $s=\#\SE$.
Then we have $a_j=0$ or $a_j=N/s$.
For $k\geq 1$, we define
\begin{equation}\label{eq:theta}
\theta_k=1/(N^k s^{\ell(k)-k}), \quad n_k=N s^{\ell(k)-\ell(k-1)-1},
\end{equation}
where we set  $\ell(0)=0$  by convention. (The $n_k$ defined above coincide with that in \eqref{eq:nk}.) Notice that $n_k\geq N$.
 Let us denote $\mu=\mu_E$ to be the uniform Bernoulli measure
of $E$.

Let  $W$ be an approximate square in ${\mathbf E}_k$. By Remark \ref{rem:cart},   $W$ has  $n_{k+1}$ direct offsprings.
Hence,  by Lemma \ref{lem:measure},
\begin{equation}\label{eq:mea_r_k}
\mu(W)=(n_1\cdots n_{k})^{-1}=\theta_k.
\end{equation}


\subsection{The coin lemma.}
The following lemma is motivated by Xi and Xiong \cite{XX10} which deals with the fractal cubes.
Recall that ${\mathcal C}_{E, k}$ is the collection of components of ${\mathbf E}_k$.

\begin{lemma}\label{lem:coin} \textbf{(Coin Lemma)} Let $k$ be an integer such that
$\ell(k)>k$. If $U\in {\mathcal C}_{E, k}$, then  there exist $V_1,\dots, V_q\in {\mathcal C}_{E,k+1}$  which are direct offsprings of $U$,
such that $\sum_{j=1}^q \mu(V_j)=\theta_k$.
\end{lemma}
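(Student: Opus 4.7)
The target is to pick a subcollection of the direct offsprings of $U$ in $\mathcal{C}_{E,k+1}$ whose $\mu$-measures sum to $\theta_k$. Since $\theta_{k+1}=\theta_k/n_{k+1}$ and each rank-$(k+1)$ approximate square has measure $\theta_{k+1}$ by Lemma~\ref{lem:measure}, this amounts to picking components of $\mathbf E_{k+1}$ inside $U$ whose total count of rank-$(k+1)$ approximate squares is exactly $n_{k+1}$.

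I would start from any single member $W$ of $U$, which contains exactly $n_{k+1}$ rank-$(k+1)$ approximate squares. By Remark~\ref{rem:cart}(i) and Lemma~\ref{lem:offspring}(i) (we are in case (i) because $\ell(k)>k$), these offsprings form a Cartesian product $A\times B$ inside $W$, with $A=\SD_{y_{k+1}}$ of size $N/s$ (using uniform horizontal fibers) and $B$ corresponding to $\SE^{\ell(k+1)-\ell(k)}$ of size $s^{\ell(k+1)-\ell(k)}$. In the trivial case where no component of $\mathbf E_{k+1}$ meeting $W$ extends outside $W$, those components tile $W\cap E$, are themselves direct offsprings of $U$, and sum to $\mu(W)=\theta_k$.

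The real content is to deal with components of $\mathbf E_{k+1}$ that straddle the boundary between $W$ and a neighboring member $W'$ of $U$. The hypothesis $\ell(k)>k$ is crucial here: at step $k+1$ we only add a horizontal digit in $\SD_{y_{k+1}}$ and vertical digits in $\SE$, and for horizontally adjacent members $y_{k+1}$ is shared, so the offspring grid inside $W'$ is a rigid translate of the one inside $W$. I would use this translational symmetry to trade pieces across the boundary: a component straddling $W$ and $W'$ that drags some approximate squares from $W'$ into a ``$W$-cluster'' corresponds position-by-position to approximate squares in $W$ belonging to other components. By including certain straddling components in full and dropping their matched non-straddling counterparts (or vice versa), one can adjust the selected set to contain exactly $n_{k+1}$ rank-$(k+1)$ approximate squares while remaining a union of direct offsprings of $U$.

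The main obstacle will be turning this trading picture into a clean selection rule that handles horizontal, vertical, and corner adjacencies among members of $U$ uniformly, and that copes with chains of straddling through several members at once. The hypothesis $\ell(k)>k$ cannot be dropped: in the case $\ell(k)=k$, Lemma~\ref{lem:offspring}(ii) forces a fresh row digit $v$ at step $k+1$ which is typically not shared between neighboring members of $U$, so the offspring grids are no longer translates and the matching argument breaks down.
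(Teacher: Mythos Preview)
Your outline is not the paper's argument, and as it stands it has a genuine gap at exactly the point you flag. The trading picture relies on the offspring grids in neighbouring members of $U$ being rigid translates of one another. This is true for \emph{horizontally} adjacent members (same $\by$, hence same $y_{k+1}$, hence same column set $\SD_{y_{k+1}}$), but it fails for \emph{vertically} adjacent members: if $W$ sits directly above $W''$ then their $(k{+}1)$-st row digits $y_{k+1}$ and $y'_{k+1}$ need not agree, so the column sets $\SD_{y_{k+1}}$ and $\SD_{y'_{k+1}}$ can differ (uniform fibers only forces their cardinalities to match). A straddling component across a horizontal edge then has no natural ``matched counterpart'' inside $W$, and the swap rule you describe does not make sense. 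You acknowledge this obstacle but do not indicate how to get past it; without that, the argument is incomplete.

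The paper avoids trading altogether by a direct construction. Pick a vacant row index $h$ (this is where the vacant-row hypothesis enters) and, for the top row of $U$, a missing column index $b_0\notin\SD_{y_{k+1}}$ (this exists since $a_j<n$). The horizontal line at height $h$ in the first new vertical digit and the vertical line at column $b_0$ form a cross in each member that splits its direct offsprings into four quadrants separated by genuine gaps. Now take the four \emph{corner} members $S_1,S_2,S_3,S_4$ of $U$ (top-left, top-right, bottom-left, bottom-right). The outer quadrant of each corner member (top-left of $S_1$, top-right of $S_2$, bottom-left of $S_3$, bottom-right of $S_4$) has nothing adjacent to it from outside that member, so it is a union of full components of $\mathbf E_{k+1}$. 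Since $S_1$ and $S_2$ share the same $y_{k+1}$, the two top quadrants together contribute exactly $|\SD_{y_{k+1}}|\cdot\#\{j\in\SE:j>h\}\cdot s^{\ell(k+1)-\ell(k)-1}$ rank-$(k{+}1)$ squares; the two bottom quadrants contribute the analogous count with $j<h$. Summing over $j>h$ and $j<h$ (and using $a_h=0$) gives all of $\SE$, hence exactly $n_{k+1}$ squares, i.e.\ total measure $\theta_k$. This sidesteps any analysis of how components straddle interior edges of $U$.
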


\begin{proof}  We pick an $h\in \{0,1,\dots, m-1\}$ such
that    $a_{h}=0$; such $h$ exists since $E$ possesses vacant rows.
Let   $S_1,S_2,S_3$ and $S_4$  be the four approximate squares in  $U$ which locate
at the most top-left corner, the most top-right corner, the most bottom-left corner, and
the most bottom-right corner, respectively.
(We remark that  $S_i$ and $S_j$ may coincide.)

Write  $S_1=Q(\bx,\by)$  where $\by=y_1\dots y_{\ell(k)}$.
Recall that
${\mathcal D}_{y_{k+1}}=\{x;~ (x,y_{k+1})\in \SD\},$
 and its cardinality is less than $n$ by Remark \ref{lem:disconnected}.  Let $b_0\in \{0,1,\dots, n-1\}\setminus {\mathcal D}_{y_{k+1}}$, and denote by $z$    the left-bottom point of $S_1$.
Then by Remark \ref{rem:cart}(i), the horizontal lines passing $z+(0,\frac{h+0.5}{m^{\ell(k)+1}})$
and the vertical line passing $z+(\frac{b_0+0.5}{n^{k+1}},0)$ make a cross, and this cross
divides the direct offsprings of $S_1$ into four disjoint parts.
Especially, the offsprings in the left-top part are isolated and thus build  one or several components of ${\mathbf E}_{k+1}$; let us  denote the collection
of  these components by ${\cal U}_1$.

Denote  $S_2=Q(\bx',\by')$  and let $z'$ be the  left-bottom of
$S_2$. We have $\by=\by'$ since both of them are located in the top row of $U$, and it follows that
  $W$ is a direct offspring of $S_1$
if and only if $W+(z'-z)$ is a direct offspring of $S_2$.
Hence, shifting the above cross by $z'-z$, the new cross
divides the direct offsprings of $S_2$ into four disjoint parts, which are congruent to the four disjoint parts
of $S_1$ respectively.
Especially, the offsprings in the right-top part build one or  several components of ${\mathbf E}_{k+1}$, and we denote the collection
of  these components by ${\cal U}_2$. It is seen that
$$
\sum_{V\in {\cal U}_1\cup {\cal U}_2} \mu(V)=\frac{\#\{j;~a_j>0 \text{ and }j>h \}}{s} \cdot \theta_k.
$$

Similar as  above, there exist ${\cal U_3}$ and ${\cal U}_4$  which are
two collections of components of ${\mathbf E}_{k+1}$ contained in $S_3$ and $S_4$ respectively, such that
  $$
\sum_{V\in {\cal U}_3\cup {\cal U}_4} \mu(V)=\frac{\#\{j;~a_j>0 \text{ and }j<h\}}{s} \cdot \theta_k.
$$
(We remark that some of ${\cal U}_j, j=1,\dots, 4$, may be the empty set.) The lemma is proved in this case.
\end{proof}

\subsection{Homogeneous tree}
Let $T$ be a homogeneous tree  with parameter $(n_k)_{k\geq 1}$.
 Let $\nu$ be the uniform measure on $\partial T$.
Then for a vertex $u$ of $T$ of level $k$, we have
\begin{equation}\label{eq:mea_Homo}
\nu([u])=\left(\prod_{i=1}^{k} n_i \right)^{-1}=\theta_{k}.
\end{equation}

The following theorem asserts that  the tree $T$ is a symbolic representation of $E$.

\begin{theorem}\label{thm:homo} Let $E=K(n,m,\SD)\in {\cal M}_{t,v,r}$   and let $T$ be a homogeneous tree
with parameters $(n_k)_{k\geq 1}$ defined as \eqref{eq:theta}.
Then $E \sim (\partial T, d_{1/n})$.
\end{theorem}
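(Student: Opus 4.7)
The plan is to invoke the structure tree machinery of Sections 3 and 4, using two trees in tandem: the structure tree $T_2$ of $E$ induced by ${\mathcal C}_E$ (Example 3.2) and the homogeneous tree $T$ of the statement. I will first establish $E \sim (\partial T_2, d_{1/n})$ via $(1/n)$-regularity of $T_2$, and then $(\partial T_2, d_{1/n}) \sim (\partial T, d_{1/n})$ via a bundle map; the two equivalences combine to give the theorem.

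For the regularity, an approximate square at rank $k$ has side lengths $1/n^k$ and $1/m^{\ell(k)}$; since $\ell(k) > k/\sigma - 1$ we have $m^{\ell(k)} > n^k/m$, so both sides are comparable to $1/n^k$. Each approximate square therefore has diameter at most $c/n^k$, and by Lemma \ref{lem:cartesian} every vertex of $T_2$ at level $k$ has diameter at most $L_0 c/n^k$. Moreover, two distinct components of ${\mathbf E}_k$ contain no pair of touching approximate squares (else they would merge into one connected piece), so on the grid $n^{-k}\Z \times m^{-\ell(k)}\Z$ their mutual distance is at least $\min(n^{-k}, m^{-\ell(k)}) = n^{-k}$. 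Hence $T_2$ is $(1/n)$-regular, and Theorem \ref{thm:Xi} gives $E \sim (\partial T_2, d_{1/n})$.

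For the bundle map, pick an integer $p$ so large that $M_k := n_{p(k-1)+1}\cdots n_{pk} \geq L_0$ for every $k$, which is possible because $n_j \geq N \geq 2$. By Theorem \ref{thm:subtree} it suffices to build a bundle map $\Delta$ between the $p$-subtree of $T_2$ and the $p$-subtree of $T$. Inductively, given a level-$k$ vertex $U$ of the $p$-subtree of $T_2$ (a component at level $pk$ in $T_2$, with $p_U := \#_{pk}(U) \leq L_0$) and a previously fixed bundle $\Delta(U)$ of $p_U$ siblings in the $p$-subtree of $T$, I extend $\Delta$ to the children $V_1,\dots, V_r$ of $U$ by partitioning $\{V_1,\dots, V_r\}$ into $p_U$ groups whose cardinalities $|V_i|$ sum in each group to exactly $M_{k+1}$, then declaring $\Delta(V_i)$ to be any subset of cardinality $|V_i|$ among the $M_{k+1}$ children of the $w \in \Delta(U)$ corresponding to that group.

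The main obstacle is this exact-sum partition, a bin-packing step which is not automatically feasible for arbitrary inputs. The resolution should come from the Coin Lemma (Lemma \ref{lem:coin}): a coin is a subfamily of children of $U$ whose total $\mu$-measure is $\theta_{pk}$, which by Lemma \ref{lem:measure} corresponds to exactly $M_{k+1}$ approximate squares at level $p(k+1)$. Since the proof of the Coin Lemma uses the vacant row guaranteed by $E \in {\cal M}_{t,v,r}$ together with the corner approximate squares of $U$, I expect that a suitable iteration or refinement of it (peeling off one coin for each of the $p_U$ approximate squares of $U$) produces the desired partition of the child components into $p_U$ coins of size $M_{k+1}$. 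Once $\Delta$ is constructed, Theorem \ref{thm:bundle} gives an equivalence between the boundaries of the two $p$-subtrees in the metric $d_{1/n^p}$, and a final invocation of Theorem \ref{thm:subtree} on both trees transports this back to $d_{1/n}$, completing the proof.
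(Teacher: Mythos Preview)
Your overall architecture matches the paper's exactly: show the component tree $T_2$ is $(1/n)$-regular, pass to $p$-subtrees, and build a measure-preserving bundle map from $T_2^*$ to $T^*$. The regularity paragraph is fine. The gap is in the combinatorial heart of the bundle map, where you write ``I expect that a suitable iteration or refinement of [the Coin Lemma] (peeling off one coin for each of the $p_U$ approximate squares of $U$) produces the desired partition.''

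The Coin Lemma does not iterate in the way you suggest. It is stated for a \emph{component} $U$ at level $k$ and produces \emph{one} coin: a subfamily of its level-$(k+1)$ children with total measure $\theta_k$. The proof extracts this coin from the four corner approximate squares of $U$; once you remove it, what remains is no longer a component and no longer has well-defined corners, so you cannot simply reapply the lemma to peel off a second coin. Thus the ``bin-packing step'' you flag as the main obstacle is genuinely unresolved in your sketch, and your choice of $p$ (merely $M_k\ge L_0$) is too weak to force it.

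The paper's resolution is to apply the Coin Lemma not to $U$ itself but at the \emph{penultimate} level $p(k+1)-1$: for each component $U_i\in{\mathcal C}_{E,\,p(k+1)-1}$ below $U$, split its level-$p(k+1)$ children into a ``one-dollar'' collection ${\cal V}_i^s$ of measure $\delta:=\theta_{p(k+1)-1}$ and a residual ``big coin'' ${\cal V}_i^b$ of measure at most $(L_0-1)\delta$. Choosing $p$ so that $N^{p-1}\ge L_0^3$ (and $\ell(p-1)>p-1$, so the Coin Lemma applies) forces the number $q$ of such $U_i$ to satisfy $q\ge N^{p-1}/L_0\ge L_0^2\ge gL_0$, where $g=p_U\le L_0$. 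Now the packing into $g$ bins of value $M=\theta_{pk}/\delta$ is easy: every big coin has value $<L_0\le M$, and there are at least $gL_0$ one-dollar coins to top off each bin after greedily placing the big coins. This is the missing idea; once you insert it, your outline becomes the paper's proof.
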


\begin{proof}Let $S$ be the structure tree  of $E$ induced by the components in $\mathbf E_k$, $k\geq 0$.
(This is the second structure tree in Section 3.2.)
Let us denote the root of $S$ and $T$ by $\phi_S$ and $\phi_T$ respectively.
Clearly, $S$ is $1/n$-regular by Lemma \ref{lem:cartesian},  and hence $E \sim (\partial S, d_{1/n})$ by Theorem \ref{thm:Xi}.

 Let $L_0$ be the constant in Lemma \ref{lem:cartesian}, and  let $p$ be an integer such that
\begin{equation}\label{Np}
N^{p-1}\geq L_0^3 \ \text{ and } \ \ell(p-1)>p-1.
\end{equation}
Let $S^*$ and $T^*$ be the $p$-subtree of $S$ and $T$, respectively.
We shall construct a bundle map $\Delta$ from $S^*$ to $T^*$ such that $\Delta$ is measure preserving.

 First, we define $\Delta(\phi_S)=\phi_T$.
Suppose that $\Delta$ has been defined on $ \bigcup_{j=0}^k {\cal C}_{E,pj}$ already.
We are going to extend $\Delta$ to   ${\mathcal C}_{E, p(k+1)}$.

 Pick  $U\in{\cal C}_{E,pk}$. Let ${\cal V}=\{V_1,\dots, V_{r}\}$ be the set of offsprings of $U$ in
${\mathcal C}_{E,p(k+1)}$, that is, $V_j$'s are direct offsprings of $U$ in $S^*$.
Let $g$ be the number of  members of $U$; by Lemma \ref{lem:cartesian} we have $g\leq L_0$.
 We claim that

\medskip

\textit{Claim.  The collection ${\cal V}$ has a partition
\begin{equation}
{\cal V}={\cal V}'_1\cup \cdots \cup  {\cal V}'_g,
\end{equation}\label{eq:V}
such that  $\mu({\cal V}'_j)=\theta_{pk}$ for every $j=1,\dots, g$.}

\medskip

Let ${\cal U}=\{U_1,\dots, U_q\}$ be the set of  offsprings of $U$
 in ${\mathcal C}_{E, p(k+1)-1}$. Clearly
$\theta_{pk}\leq \mu(U)\leq qL_0\theta_{p(k+1)-1}$
and it follows that
$$q\geq \frac{\theta_{pk}}{\theta_{p(k+1)-1}}\frac{1}{L_0}\geq  \frac{ N^{p-1}}{L_0}\geq L_0^2.$$

Denote $\delta=\theta_{p(k+1)-1}$.
Since the rank of $U_i$ is no less than $p-1$ and $\ell(p-1)>p-1$, by Lemma \ref{lem:coin}, the direct offsprings of $U_i$ can be divided into two collections ${\cal V}^s_i$
and ${\cal V}^b_i$,  such that
the total measure of ${\cal V}^s_i$  is $\delta$ (the small collection),
and the total measure of ${\cal V}^b_i$ is $(N_i-1)\delta\leq (L_0-1)\delta$ (the residual collection), where $N_i$
is the number of members of $U_i$.
Therefore, we have a partition of ${\cal V}$ given by
$$
\bigcup_{i=1}^q \{{\cal V}_i^s,   {\cal V}_i^b\}.
$$

Now we regard $\delta$ as one `dollar' and regard each ${\cal V}^s_i$ as a one-dollar coin.
We regard ${\cal V}^b_i$ as a big coin that its value varies from $0$ to $L_0-1$. The total wealth of these coins is $gM$ `dollars', where $M=\prod_{j=1}^{p-1} n_{pk+j}$.
Then the claim  holds due to the following facts: first, the  value of every coin is no larger than $L_0-1$; secondly,
 we have plenty of  one-dollar coins (actually, the number is no less than $q$, and
   $q\geq L_0^2\geq gL_0$   since  $g\leq L_0$). Our claim is proved.

\medskip

By the induction hypothesis on $\Delta$, $\Delta(U)$ is a bundle
of $T^*$ of level $k$, which we write as
$$\Delta(U)=\{w_1,\dots, w_t \}.$$
  Clearly $t=g$,
since $\mu(U)=g\theta_{pk}$, $\nu([\Delta(U)])= t\theta_{pk}$,
and $\Delta$ is measure preserving.

 Let us  regard $\theta_{p(k+1)}$ as a 'cent'.  Then for any $V\in {\cal V}$,   $\mu(V)$ is a multiple of `cent',
 and the $\nu$-measure of a vertex of $T$ of level $p(k+1)$ is  one `cent'.

 For $j\in \{1,\dots, g\}$, ${\cal V}_j'$ is a collection of $p$-step offsprings of $U$ which we  write as
 $${\cal V}_j'=\{v_{j,1},\dots, v_{j,h_j}\};$$
 accordingly we take a partition
 $$
 {\cal W}_{j,1} \cup \cdots \cup  {\cal W}_{j,h_j}
 $$
of direct offsprings of $w_j$ in $T^*$  satisfying $\#{\cal W}_{j,i}=\mu(v_{j,i})/\theta_{p(k+1)}$. We define
\begin{equation}
\Delta(v_{j,i})={\cal W}_{j,i}, \quad j\in \{1,\dots, g\}, \quad i\in \{1,\dots, h_j\}.
\end{equation}
Repeating the above process for all $U$,
we extend $\Delta$ to  vertices of $S^*$ of level $k+1$.
It is not hard to check that  the three requirements in the definition of bundle map still hold and $\Delta$
is still measure preserving.

 Hence, $\Delta$ is a bundle map from $S^*$ to $T^*$, and   $(\partial S^*,d_{1/n^p})\sim (\partial T^{*},d_{1/n^p})$ by Theorem \ref{thm:bundle}.
On the other hand, by Theorem \ref{thm:subtree}, we have  $E \sim (\partial S,d_{1/n})\sim (\partial S^*, d_{1/n^p})$
and $(\partial T,d_{1/n})\sim (\partial T^*, d_{1/n^p})$, so
 \begin{equation}\label{eq:eqivalent_Homo}
E \sim  (\partial S^*,d_{1/n^p}) \sim (\partial T^{*},d_{1/n^p})\sim (\partial T,d_{1/n}).
\end{equation}
 The theorem is proved.
\end{proof}

\begin{proof}[\textbf{Proof of Theorem \ref{thm:sturmian}.}] Let $T$ be a homogeneous tree
with parameters $(n_k)_{k\geq 1}$.  By Theorem \ref{thm:homo} and Lemma \ref{lem:product}, we have
$$
E\sim (\partial T, d_{1/n})\sim \left (\prod_{k=1}^\infty \{0,1,\dots, n_k-1\}, d_{1/n}\right ),
$$
which proves the first assertion.

If   $\sigma=p/q\in \Q$, then  $\ell(pk)= qk$ for  $k\geq 0$. It follows that
$$
\prod_{j=1}^{p} n_{pk+j}=\left (\frac{N}{s}\right )^{p}\cdot s^{\ell(pk+p)-\ell(pk)}=N^p s^{q-p}.
$$
 Let  $T^*$ be the $p$-subtree of $T$ and denote  $N^*=N^p s^{q-p},$ then
$$E\sim (\partial T^*, d_{1/n^p})\sim (\{0,1,\dots, N^*-1\}^\infty,  d_{1/n^p} ).$$
 The second assertion is proved.
\end{proof}

Recall that the Hausdorff dimension of a self-affine carpet  is $\log_m\left (\sum_{j=0}^{m-1}a_j^\sigma\right )$ (\cite{Bed84, M84}).

\begin{proof}[\textbf{Proof of Theorem \ref{thm:regular}.}]
Let $N'$ be the cardinality of the digit set of $F$, and let $s'$ be the number of non-vacant rows of $F$.

(i) Let $\sigma=p/q\in \Q$.
 If $E$ and $F$ have the same Hausdorff dimension, then
$N^p  s^{q-p}=(N')^p (s')^{q-p}:=N^*$.
By Theorem \ref{thm:sturmian}, we have that both $E$ and $F$ are equivalent to the symbolic space
$(\{0,1,\dots, N^*-1\}^\infty,  d_{1/n^p} ), $
so $E\sim F$. That $E\sim F$ implies $\dim_H E=\dim_H F$ is folklore.
Assertion (i) is proved.

(ii) Let $\sigma\in {\mathbb Q}^c$.  If $E$ and $F$ share the same distribution sequence up to a permutation,
then $N=N'$ and $s=s'$.  By Theorem \ref{thm:sturmian},
$E$ and $F$ are Lipschitz equivalent to the same symbolic space $(\Omega, d_{1/n})$
  determined by $(n_k)_{k\geq 1}$ where
$n_k=N s^{\ell(k)-\ell(k-1)-1}$, so $E\sim F$. The necessity part is guaranteed by Proposition \ref{pro:irrational}. The second assertion is proved.
\end{proof}

\section{\textbf{Symbolic space}}
Recall that  $\lambda$ is a metric on  $\SD^\infty$
defined by
$$
\lambda((\bx, \by), (\bx', \by'))=\max\{d_{1/n}(\bx, \bx'), d_{1/m}(\by, \by')\}.
$$
\begin{proof}[\textbf{The proof of Theorem \ref{thm:symbol}. }]
First we define  a structure tree of $\SD^\infty$.
For two words $\bx$ and $\bi$, we denote  $\bx\lhd\bi$  if $\bx$ is a prefix of $\bi$.
For $k\geq 1$, we call
$$
[\bx,\by]=\{(\bi,\bj)\in \SD^\infty;~ \bx\lhd\bi, \by\lhd\bj\}
$$
an \emph{approximate square} of $\SD^\infty$ of rank $k$,  if $\bx=x_1\dots x_k, \by=y_1\dots y_{\ell(k)}$
are two words such that $(x_j,y_j)\in \SD$ for $1\leq j\leq k$ and $y_j\in\SE$ for $j>k$.
It is seen that  the approximate squares of the same rank are disjoint.

We denote by ${\cal C}_k$  the collection of approximate squares of rank $k$ of $\SD^\infty$, especially we set ${\cal C}_0=\SD^\infty$ by convention.
Let $T$ be  the structure  tree of $\SD^\infty$ induced by $({\cal C}_k)_{k\geq 0}$.

We claim that $ T$ is $1/n$-regular.
Indeed, if $[\bx,\by]$ and $[\bx',\by']$ are two approximate squares of rank $k$,
an easy calculation shows that $\lambda([\bx,\by],[\bx',\by'])\geq 1/n^k$ and
$$
  \text{diam}([\bx,\by])\leq \max\{1/n^k, 1/m^{\ell(k)}\}\leq m/n^k.
$$
It follows that $T$ is $1/n$-regular, so by Theorem \ref{thm:Xi}, we have
$(\partial T,d_{1/n})\sim (\SD^\infty, \lambda)$.

On the other hand,
 every vertex $[\bx,\by]$ of $T$ of level $k$ has $n_{k+1}:=Ns^{\ell(k+1)-\ell(k)-1}$ direct offsprings (see Remark \ref{rem:cart}), so $ T$ is a homogeneous tree
with parameters $(n_k)_{k\geq 1}$.
Therefore,  $ (\SD^\infty, \lambda) \sim (\partial T,d_{1/n})\sim E$  where the last equivalence is due to  Theorem \ref{thm:homo}.
\end{proof}



\begin{thebibliography}{99}
\bibliographystyle{ieee}
\addcontentsline{toc}{chapter}{Bibliography}
\bibitem{Bar07} J. Barral and M. Mensi, {\it Gibbbs measures on self-affine Sierpi$\acute{n}$ski
             carpets and their singularity spectrum,} Ergod. Th. \& Dynam. Sys., \textbf{27}(5) (2007), 1419-1443.

\bibitem{Bed84} T. Bedford, {\it Crinkly curves, Markov partitions and dimensions,} PhD Thesis, University of Warwick, 1984.

 \bibitem{DS}
G.~David and S.~Semmes, {\it Fractured fractals and broken dreams :
self-similar geometry through metric and measure}, Oxford Univ.\
Press,~1997.

  \bibitem{FaMa92} K. J.~Falconer and D. T.~Marsh, {\it On the Lipschitz equivalence of
Cantor sets}, Mathematika, \textbf{39} (1992), 223-233.

\bibitem{Fa86} K. J. Falconer, {\it The geometry of fractal sets}, Cambridge University Press, 1985.

\bibitem{Feng} D. J. Feng, Z. Y. Wen and J. Wu, \textit{Some dimensional results for homogeneous Moran sets},
Sci. China (Seriers A), \textbf{40} (1997), 475-482.



\bibitem{Hut81} J. E. Hutchinson,  {\it Fractals and self-similarity,} Indiana Univ. Math. J., \textbf{30} (1981), 713-747.

\bibitem{Jiang19} K. Jiang, S. J. Wang and L. F. Xi, \emph{Lipschitz classification of self-similar sets with exact overlaps.} Preprint 2019.

\bibitem{JR11} T. Jordan and M. Rams, {\it  Multifractal analysis for Bedford-McMullen carpets,}
Math. Proc. Camb. Phil. Soc., \textbf{150} (2011), 147-156.

\bibitem{King95} J. F. King, \textit{The singularity spectrum for general Sierpinski carpets,} Adv. Math., \textbf{116} (1995), 1-11.

\bibitem{Miao2013} B. M. Li, W. X. Li and J. J. Miao, {\it Lipschitz equivalence of McMullen sets,} Fractals, \textbf{21}(3 \& 4) (2013), 1350022, 11 pages.

\bibitem{LWW16} H. Li, C. Wei and S. Y. Wen, {\it Doubling property of self-affine measures on carpets of Bedford and McMullen,} Indiana Univ. Math. J., \textbf{65} (2016), 833-865.

\bibitem{Loth2002} M. Lothaire, \textit{Algebraic combinatorics on words.} Cambridge University Press, 2002.

 \bibitem{LL13} J. J. Luo and K. S. Lau, \textit{Lipschitz equivalence of self-similar sets and
 hyperbolic boundaries,} Adv. Math., \textbf{235} (2013), 555-579.

    \bibitem{MW89} R. D. Mauldin and S. C. Williams, \textit{Scaling Hausdorff measures,} Mathematika, \textbf{36} (1989), 325-333.

\bibitem{M84} C. McMullen, \textit{The Hausdorff dimension of general Sierpi$\acute{n}$ski carpets,} Nagoya Math. J., \textbf{96} (1984), 1-9.

\bibitem{Miao2017} J. J. Miao, L. F. Xi and Y. Xiong,
\textit{Gap sequences of McMullen sets,} Proc. Amer. Math. Soc., \textbf{145}(4)(2017), 1629-1637.

  \bibitem{RRW12}
H. Rao, H. J. Ruan and Y. Wang,  {\it Lipschitz equivalence of Cantor sets and algebraic properties of contraction ratios},  Trans. Amer. Math. Soc.,  \textbf{364} (2012), 1109-1126.

 \bibitem{RRX06} H. Rao, H. J. Ruan and L. F. Xi, {\it Lipschitz equivalence of self-similar
 sets,} C. R. Math. Acad. Sci. Paris, \textbf{342}(2006), 191-196.

 \bibitem{Rao2019} H. Rao, Y. M. Yang and Y. Zhang, \textit{The bi-Lipschitz
classification  of   Bedford-McMullen carpets (I): Invariance of multifractal spectrum and measure preserving property}, 
Preprint 2020(arXiv:2005.07451 [math.DS]).

\bibitem{R10} K. A. Roinestad, \textit{Geometry of fractal squares,} Ph. D. thesis, Virginia Polytechnic Institute and State University, 2010.

 \bibitem {XX10} L. F. Xi and Y. Xiong, \textit{Self-similar sets with initial cubic patterns}, C. R. Math. Acad. Sci. Paris, \textbf{348} (2010), 15-20.

\bibitem{YZ20} Y. M. Yang and Y. Zhang, \textit{The bi-Lipschitz
classification  of   Bedford-McMullen carpets (II): Half-symbolic spaces and symbolic spaces}, Preprint 2020.
\end{thebibliography}
\end{document}